\def\@@insvline#1#2{{\setbox0\hbox{\m@th$#1\mathrm I$}  
  \rlap{\m@th$#1 \mkern 5mu  
  \vrule height.95\ht0 depth-.005\ht0 width.09\ht0 $}  
  {\mathrm #2} }}
\def\Q{\mathpalette\@@insvline{Q}}
  \newtheorem{defi}{Definition}
  \newcommand{\bd}{\begin{defi}} 
  \newcommand{\ed}{\end{defi}}
  \newtheorem{rem}[defi]{Remark}  
  \newtheorem{lemm}[defi]{Lemma}  
  \newcommand{\bl}{\begin{lemm}}
  \newcommand{\el}{\end{lemm}} 
  \newtheorem{theo}[defi]{Theorem}
  \newcommand{\bt}{\begin{theo}}
  \newcommand{\et}{\end{theo}}
  \newtheorem{cor}[defi]{Corollary}
  \newcommand{\bc}{\begin{cor}}
  \newcommand{\ec}{\end{cor}}
  \newtheorem{pro}[defi]{Proposition}
  \newcommand{\bp}{\begin{pro}}
  \newcommand{\ep}{\end{pro}}
  \def\proof{\@ifnextchar[\opargproof{\opargproof[\bf Proof \hfil\\ ]}}
  \def\opargproof[#1]{\par\noindent {\bf #1 }}
  \def\endproof{{\unskip\nobreak\hfil\penalty50\hskip8mm\hbox{}
  \nobreak\hfil
  \(\Box\)\parfillskip=0mm \par\vspace{3mm}}}
\DeclareFontFamily{OT1}{nice}{}
\DeclareFontShape{OT1}{nice}{m}{n}{<5> <6> <7> <8> <9> <10>
<12><10.95><14.4><17.28><20.74><24.88>callig15}{}
\DeclareFontFamily{U}{nice}{}
\DeclareFontShape{U}{nice}{m}{n}{<5> <6> <7> <8> <9> <10>
<12><10.95><14.4><17.28><20.74><24.88>callig15}{}
\DeclareSymbolFont{calligra}{U}{nice}{m}{n}
\DeclareSymbolFontAlphabet{\nice}{calligra}
\DeclareFontFamily{OT1}{cmdh}{}
\DeclareFontShape{OT1}{cmdh}{m}{n}{<10>cmdunh10}{}
\def\epsilon{\varepsilon}  
\def\phi{\varphi}
\def\div{\operatorname{div}}
\begin{document} 

\title{Analysis of a turbulence model related to that of $k-\epsilon$ for stationary and compressible flows}
\bigskip

\date{\today}

\author{P. Dreyfuss\thanks{Laboratoire J.A. Dieudonn\'e - Universit\'e de Nice Sophia Antipolis 
- Parc Valrose 06108 Nice (email: dreyfuss@unice.fr).}} 

\maketitle

\abstract{We shall study a turbulence model arising in compressible fluid mechanics. The model called $\theta - \phi$ we study is closely related to the $k-\epsilon$ model. We shall establish existence, positivity and regularity results in a very general framework.
} \noindent

\begin{keywords}
turbulence modelling, $k-\epsilon$ model, compressible flows, Stampacchia estimates. \\ 
2010 MSC: 35J70, 35J75, 76F50, 76F60
\end{keywords}


\section{Introduction}

We shall first recall some basic ideas concerning the modelisation of the turbulent fluids, the reader can consult \cite{moha,wilcox} for 
a more detailled introduction. \\ 

Let ${\bf u},p,\rho,T$ be the velocity, pressure, density and temperature of a newtonien compressible fluid. Let also 
$\tilde{\Omega}\subset \mathbb R^3$ a domain which is asummed to be bounded. Then the motion of 
the flow in $\tilde{\Omega}$ at a time $t\in \mathbb R^+$ can be described 
by the compressible Navier Stokes equations (see system (C) page 8 in \cite{lions}).
It is well known that direct simulation based on such a model is harder or even impossible at high reynolds numbers. The reason is 
that too many points of discretization are necessary and so only very simple configurations can be handled. \\ 

Thus engineers and physicists have proposed new sets of equations to describe the average of a turbulent flow. The most famous one is the 
$k-\epsilon$ model, introduced by Kolmogorov \cite{kolmo}. We shall briefly present its basic principles in the following.  
Let $v$ denote a generic physical quantity subject to turbulent (i.e. unpredictable at the macroscopic scale), we introduce 
its mean part (or its esperance) $\langle v\rangle$ by setting: $$\langle v(x,t)\rangle=\int_{\mathcal{P}} v(\omega,x,t) d\mathcal{P}(\omega),$$
where the integral is taken in a probalistic context which we shall not detail any more here. Note however that  
the operation $\langle .\rangle$ is more generally called a filter. The probalistic meaning is one but not the only possible filter 
(see for instance \cite{moha} chap.3). We shall then consider the decomposition: $v = \langle v\rangle + v'$, where $v'$ is refered to the non computable or the non relevant part and 
$\langle v\rangle$ is called the mean part (i.e. the macroscopic part). \\
 
The principle of the $k-\epsilon$ model is to describe the mean flow in terms of the mean quantities 
$\langle{\bf u}\rangle,\langle p\rangle,\langle\rho\rangle,\langle T\rangle$ together 
with two scalar functions $k$ and $\varepsilon$, which contains relevant informations about the small scales processes (or the turbulent processes). 
The variable $k$ (SI: [$\frac{m^2}{s^2}$]) is called the turbulent kinetic energy and $\epsilon$ [$\frac{m^2}{s^3}$] is the rate of dissipation of the kinetic energy. 
They are defined by: 
\begin{equation} 
k=\frac{1}{2}|{\bf u}'|^2 \qquad \varepsilon=\frac{\nu}{2}\langle |\nabla {\bf u}'+ (\nabla {\bf u}')^T|^2\rangle, \label{def-phitheta}
\end{equation}
where $\nu$ is the molecular viscosity of the fluid.
The model is then constructed by averaging (i.e. by appling the operator $\langle .\rangle$ on) the Navier-Stokes equations. Under appriopriate 
assumptions (i.e. the Reynolds hypothesis in the incompressible case, and the Favre average in the compressible case) we obtain 
a closed system of equations for the variables $\langle {\bf u}\rangle,\langle p\rangle,\langle\rho\rangle,\langle T\rangle,k$ and $\varepsilon$ (see \cite{moha} pages 61-62 for the 
incompressible case, and pages 116-117 in the compressible situation). \\

Here we shall focus on the equations for $k$ and $\epsilon$ and we consider that the others quantities $\langle {\bf u}\rangle ,\langle p\rangle,\langle\rho\rangle,\langle T\rangle$
 are known. Moreover, in order to simplify the readability we do not use the notation $\langle .\rangle$, i.e. in the 
sequel we will write ${\bf u}$ instead of $\langle {\bf u}\rangle$ and $\rho$ instead $\langle \rho \rangle$ to represent the mean velocity and density of the fluid. 
The equations for $k$ and $\epsilon$ are of convection-diffusion-reaction type:    
\begin{align}
&\partial_t k + {\bf u} \cdot \nabla k - \frac{c_{\nu}}{\rho} \text{div }((\nu +\rho\frac{k^2}{\varepsilon})\nabla k)=
c_\nu \frac{k^2}{\epsilon}F-\frac{2}{3}kD-\epsilon, \label{k1}\\ 
&\partial_t \epsilon + {\bf u} \cdot \nabla \epsilon - \frac{c_{\epsilon}}{\rho} \text{div }((\nu+\rho\frac{k^2}{\varepsilon})\nabla \varepsilon)=
c_1kF-\frac{2c_1}{3c_\nu}\epsilon D-c_2\frac{\epsilon^2}{k}, \label{eps1}
\end{align}
where $D(x,t):=\div {\bf u}(x,t), \ F(x,t):=\frac{1}{2}|\nabla {\bf u}+ (\nabla {\bf u})^T|^2-\frac{2}{3}D(x,t)^2\geq 0$ (see subsection \ref{appendixIII} in the Appendix)  and $c_\nu, c_\epsilon, c_1, c_2$ are generally taken as positive constants (see (\ref{val1}) in the Appendix).  
 
Note that equations (\ref{k1})-(\ref{eps1}) are only valid sufficiently far from the walls. In fact, in the 
vicinity of the walls of the domain $\tilde{\Omega}$, there is a thin domain $\Sigma$, called logarithmic layer in which the modulus of the velocity goes from 0 to $\mathcal{O}(1)$. In this layer we can use some wall law or a one equation model (see \cite{moha} chap.1 and \cite{moha2}) instead of 
(\ref{k1})-(\ref{eps1}). Note however that the equations (\ref{k1})-(\ref{eps1}) can be considered even in the logarithmic layer if we 
allow the coefficients $c_{\nu},c_{\epsilon},c_1$ and $c_2$ to depend appriopriately on some local Reynolds numbers (see \cite{moha} pages 59-60 and page 115). In this last situation the system is called Low-Reynolds number $k$-$\epsilon$ model. \\ 

In the following we focus on the study in the domain $\Omega:=\tilde{\Omega}\setminus\Sigma$ and we assume that its boundary $\partial \Omega$ is 
Lipschitz\footnote{see \cite{evans} p. 127}. We denote by ${\bf n}(x)$ the outward normal defined for all points $x\in\partial \Omega$. 
The boundary conditions for $k$ and $\epsilon$ on $\partial \Omega$ are well understood. We have: 
\begin{equation} 
k = k_0 \quad \text{ and } \quad \epsilon=\epsilon_0 \quad \text{ on } \partial \Omega, \label{ke-bord}
\end{equation}
where $k_0$ and $\epsilon_0$ are strictly positive functions which can be calculated by using a wall law (see \cite{moha} p.59) or 
a one equation model (see \cite{moha2}). In the following we assume that $k_0$ and $\epsilon_0$ are given. Moreover we can assume (see again 
\cite{moha} p.59) that:
\begin{equation} 
{\bf u}\cdot{\bf n} = 0 \text{ on } \partial \Omega. \label{u-bord}
\end{equation} 

We shall concentrate in this paper on a reduced system called $\theta - \phi$ model. The new variables 
$\theta$ [s] and $\phi$ [$m^{-2}$] are obtained from $k$ and $\epsilon$ by the formulas: 
\begin{equation} 
\theta=\frac{k}{\epsilon} \qquad \phi=\frac{\epsilon^2}{k^3}. \label{ke-def}
\end{equation}
These variables have a physical meaning (see \cite{wilcox}): $\theta$ represents a characteristic time of turbulence and 
$L=\phi^{-1/2}$ is a characteristic turbulent length scale. 
By using this change of variable in the equations (\ref{k1})-(\ref{eps1}) and after considering some modelisation arguments for the 
diffusion processes (see the Annexe) we obtain:
$$
(P)  \quad\left\{
	\begin{array}{l} 
	\partial_t \theta + {\bf u} \cdot \nabla \theta -\frac{1}{\rho}\div\left( (\nu+\frac{c_\theta\rho}{\theta \phi}) \nabla \theta\right) = -c_3 \theta^2 F + c_4 \theta D + c_5\\ 
     \partial_t \phi + {\bf u} \cdot \nabla \phi -\frac{1}{\rho}\div\left( (\nu+\frac{c_\phi \rho}{\theta \phi}) \nabla \phi\right) = 
     -\phi\big( c_6\theta F - c_7 D + c_8 \theta^{-1}\big)
	\end{array}
	\right. 
$$ 
where the coefficients $c_\theta,c_\phi$ and $c_i$ are all positive.

Problem $(P)$ is known as the $\theta-\varphi$ model. It differs from the $k-\epsilon$ one only by the diffusive parts and it is attractive 
by some stronger mathematical properties. Another model closely related to these systems, and having some popularity, is the $k-\omega$ one (with $\omega=\theta^{-1}$, see for instance \cite{wilcox}). \\ 
In the papers \cite{Lew,marmol} the authors have established the existence of a weak solution for (P) and a property of positiveness. This last feature takes the model useful in practice: it can be used directly or also as an intermediary stabilization procedure to the $k-\epsilon$ one (see \cite{moha-num}). Another important property attempted for a turbulence model is its 
capability to predict the possible steady states. In the previous works  only the evolutive version of  (P) was studied (except in \cite{marmol2} where however only the incompressible situation, with pertubated viscosities was considered), and 
the results obtained cannot predict the existence or non-existence of steady states. \\ 

Hence in this paper we shall study the stationary version of (P) on a bounded domain $\Omega \subset \mathbb R^N$, N=2 or 3, on which we impose the boundary conditions 
$\theta = a, \phi = b$ on $\partial \Omega$.   
Remark that by using (\ref{ke-bord}) together with (\ref{ke-def}) we obtain 
\begin{equation} 
a=\frac{k_0}{\epsilon_0} \quad b=\frac{\epsilon_0^2}{k_0^3}. \label{def-ab}
\end{equation}
Hence we can assume that $a$ and $b$ are strictly positive given functions. \\ 

We shall establish existence, positivity and regularity results in a very general framework.
In \cite{dreyfuss} we established existence and regularity results for a turbulent circulation model involving 
${\bf u}$ and $k$ as unknowns.
The reader interested for recent references concerning the numerical simulation of turbulent fluid can consult \cite{sagaut}. 

\section{Main results}

\subsection{Assumptions and notations}
 
Let (Q) denote the stationary system associated to (P). For simplicity we introduce the new parameters $C_{\text{ind}}:= \rho c_{\text{ind}}$ where the subscript 'ind' takes the integer values 3,4,5,6,7,8 or the letters $\theta$ and $\phi$. 
Then our main model (Q) has the following form: \noindent \\ 
 $$
(\text{Q})  \quad\left\{
	\begin{array}{l} 
	\rho{\bf u} \cdot \nabla \theta -\div\left( (\nu+\frac{C_{\theta}}{\theta \phi}) \nabla \theta\right) = - C_3  F \theta^2 + C_4 \theta D + C_5 \quad \text{in } \Omega \\ 
     \rho {\bf u} \cdot \nabla\phi -\div\left( (\nu+\frac{C_{\phi}}{\theta\phi}) \nabla \phi\right) = 
      -\varphi(C_6 \theta F - C_7 D + C_8 \theta^{-1}) \quad \text{in }\Omega \\ 
	 \theta = a, \phi = b \quad \text{on } \partial \Omega 
	\end{array}
	\right. 
$$
For physical reasons we are only interested in positive solutions $(\theta,\phi)$ for (Q). Note however that even with this rectriction, the problem (Q) may be singular (i.e. the viscosities $\nu+\frac{C_{\theta}}{\theta \phi}$ and $\nu+\frac{C_{\phi}}{\theta \phi}$ may be unbounded). Moreover, because we allow $\nu \equiv 0$ the equations may degenerate (i.e. the viscosities may vanish). Hence without additional restriction there may be various non equivalent notions of weak solution (see for instance \cite{dreyfuss}).\\ 

In fact a good compromise between respect of the physics, simplification of the mathematical study and obtention of significative results, is to restrict 
$\theta$ and $\phi$ to be within the classe $\mathcal S$ defined by:  
$$ \mathcal S = \big\{ f: \Omega \to \mathbb R^+ \text{ such that } f\in H^1(\Omega)\cap L^{\infty}(\Omega), \ f^{-1} \in  L^{\infty}(\Omega) \big\}. $$
In particular, if the parameters appearing in (Q) are sufficiently regular and if we restrict $\theta$ and $\phi$ to be within the classe $\mathcal S$, then the notion 
of a weak solution for (Q) is univocally defined: it is a distributional solution ($\theta,\phi$) that satisfies the boundary conditions in the sense of the trace. \\ 
In this last situation we will tell that ($\theta,\phi$) is a {\bf weak solution of (Q) in the class $\mathcal S$}. \\ 

In order to can consider such a weak solution for (Q) we shall precise in the following some sufficient conditions of regularity for 
the data. \\  
Let $N$=2 or $3$ denote the dimension of the domain $\Omega$, and $r$ be a fixed number such that:  
\begin{equation}
r>\frac{N}{2}.  \label{r}
\end{equation}
We then have the following continuous injection (see lemma \ref{lemrep}):  
\begin{equation}
L^r(\Omega) \subset  W^{-1,\beta}(\Omega), \quad \text{where }\beta = r^{*} > N. \label{beta}
\end{equation}

Recall that $D=\div({\bf u})$. We will consider the following assumptions: 
\begin{itemize}
\item Assumptions on $\Omega$:  
\begin{equation} 
 \Omega \subset \mathbb R^N \quad \text{is open, bounded and it has a Lipschitz boundary }
 \partial \Omega. \label{h-dom1}
\end{equation} 
\item Assumptions on the flow data \footnote{when $N=2$ one assumption in (\ref{h-u1}) can be relaxed: 
 ${\bf u}\in (L^{p}(\Omega))^{2}$ with $p>2$ (instead of $p=3$) is sufficient, but this would not improve any result significantly.}${\bf u}, F, D,\rho$ and $\nu$:  
\begin{eqnarray}
   &&\nu \geq 0, \label{h-nu} \\
    &&F,\rho:\ \Omega \to \mathbb R^+,\quad \rho,\rho^{-1} \in L^{\infty}(\Omega), \quad F \in L^{r}(\Omega), \label{h-rho} \\    	  
	  &&{\bf u} \in  (L^3(\Omega))^{N}, D \in L^r(\Omega), \div(\rho {\bf u})=0,\label{h-u1} \\ 
	  &&{\bf u}\cdot{\bf n} = 0 \text{ on } \partial \Omega \label{h-bord}.
\end{eqnarray}
\item Assumptions on the turbulent quantities on the boundary: 
\begin{eqnarray}	   
	  && a,b \in H^{1/2}(\partial \Omega)\cap L^{\infty}(\partial \Omega), \notag \\ 
	  && a(x),b(x) \geq \delta>0 \text{ a.e. }x \in \partial \Omega, \label{h-bord2}
\end{eqnarray}
where $\delta>0$ is a fixed number. 
\item Assumptions on the model coefficients: 
\begin{eqnarray} 
	  &&  C_{i}: \Omega \to \mathbb R^{+},  \ C_5,C_8 \in L^{r}(\Omega),  \text{ for } i\neq 5,8: C_i \in L^{\infty}(\Omega) \label{h-Ci1} \\
	  && C_{\text{ind}}:  \Omega\times(\mathbb R^{+})^2 \to\mathbb R^{+} \text{ are Caratheodory } \label{h-Ci2} \\ 
              && C_{\text{ind}}(x,v,w) \geq \alpha_{\text{ind}} >0 \quad \forall v,w \in \mathbb R^{+} \text{ and for a.a. } x\in \Omega \label{h-Ci3} \\ 
              && \forall v,w \in \mathbb R^{+}, \ x\to C_{\text{ind}}(x,v,w) \in L^{\infty}(\Omega), \label{h-Ci4}
\end{eqnarray}
where in (\ref{h-Ci2})-(\ref{h-Ci4}) $C_{\text{ind}}$ means $C_{\theta}$ or $C_{\phi}$. The assumption (\ref{h-Ci2}) signifies that $\forall v,w  \in \mathbb R^{+}$, 
$x\to C_{\text{ind}}(x,u,v)$ is measurable and for a.a. $x\in\Omega: (v,w)\to C_{\text{ind}}(x,v,w)$ is continuous. This ensures that $C_{\text{ind}}(x,\theta,\phi)$ 
is measurable when $\theta$ and $\phi$ are measurable. The condition (\ref{h-Ci3}) means that $C_{\text{ind}}$ is uniformly positive, whereas (\ref{h-Ci4}) tells that 
$C_{\text{ind}}(x,\theta,\phi)$ remains bounded if $\theta$ and $\phi$ are bounded.
\end{itemize}

We will study problem (Q) under the {\bf main assumption}:  
$$(A_0): \quad (\ref{r})-(\ref{h-Ci4}) 
\text{ are satisfied.}$$

Note that in the main situation $\text{($A_0$)}$ the assumption (\ref{h-nu}) made for $\nu$ allows the possibilty $\nu\equiv 0$. In other words the molecular viscosity $\nu$ can 
be neglected in the model. This is often chosen in practice because the eddy viscosities $\frac{C_{\theta}}{\theta\phi}$ and $\frac{C_{\phi}}{\theta\phi}$ are dominant in the physical situations (see \cite{moha,moha2}). \\ 
Remark also that the coefficients $C_i$ are allowed to depend on $x$, and the viscosity parameters $C_{\theta},C_{\phi}$ may depend on $x,\theta,\phi$. \\ 

For a given function $f: \Omega \to \mathbb R$, we shall use the notations $f^{+}$ and $ f^{-}$ to represent the positive and negative parts of $f$, that is: 
$$f=f^{+}+f^{-}, \quad f^{+}(x)=\max(f(x),0) \geq 0, \quad  f^{-}(x)=\min(f(x),0) \leq 0.$$ 
We will also consider some assumption of low compressibility of the form: 
\begin{equation} 
\|D^{+}\|_{L^r(\Omega)} \leq \tau, \label{lowcomp}
\end{equation}   
for some $\tau>0$ that will be precised. \\ 
This last kind of condition seems to be necessary (see the Appendix) in order to obtain a weak solution for (Q) in the three dimensional case, whereas when $N=2$ we shall use some particularities of the situation to obtain a weak solution without any assumption of low compressibility. Nevertheless in this case we will assume that in addition to $\text{($A_0$)}$ the following condition is satisfied: 
$$\text{($A_1$)}: \quad \frac{C_4^2}{C_3} \in L^r, \quad c_6=0.014, \ c_7=0.104, \ c_8=0.84.$$
In this last condition the values for $c_6,c_7$ and $c_8$ are in fact their classical constant values (see (\ref{val4}) in the Appendix)

In the sequel we denote by $DATA$ some quantity depending only on the data under the assumption $(A_0)$, i.e.: 
\begin{align}
DATA = Const\big(&\Omega,a,b,\|\mathbf{u}\|_{(L^{3})^N},\alpha_{\theta},\alpha_{\phi},(\|C_j\|_{L^r})_{j=5,8},\|F\|_{L^r}, \notag \\ 
&(\|C_i\|_{L^{\infty}})_{i=3,4,6,7}\big). \label{datas}
\end{align}
Note that $DATA$ does not depends on $D$ and $\nu$. \\ 
The exact form of the dependency (i.e. the function $Const$) is allowed to change from one part of the text to another.
\subsection{Main results}

We shall establish two theorems of existence. The first one applies if  N=2 or 3 and the second one is limited to the case N=2. 
\begin{theo}\label{theo1}
Assume that ($A_0$) holds. Then there exists $\tau > 0$ such that if  $\|D^{+}\|_{L^r(\Omega)} \leq \tau$ 
then problem (Q) admits at least one weak solution $(\theta,\phi)$ in the class $\mathcal S$. 
\end{theo}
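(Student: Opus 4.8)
The plan is to combine a Schauder fixed-point argument with Stampacchia-type $L^\infty$ estimates; the entire difficulty is to keep the approximating solutions inside the class $\mathcal S$, i.e. bounded and bounded away from $0$, so that the singular/degenerate viscosities make sense.

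\emph{Step 1 (Regularization).} Since the coefficients $\nu+C_\theta/(\theta\phi)$, $\nu+C_\phi/(\theta\phi)$ and the reaction term $C_8\theta^{-1}$ blow up as $\theta\phi\to0$, I would first fix two levels $0<m<M$ and replace every occurrence of $\theta,\phi$ in the coefficients and in the zero-order terms by the truncations $T(s)=\min(\max(s,m),M)$, writing $A_\theta=\nu+C_\theta/(T(\theta)T(\phi))$ and $A_\phi=\nu+C_\phi/(T(\theta)T(\phi))$. This yields an approximate problem $(Q_{m,M})$ whose diffusion coefficients are bounded and uniformly elliptic, with ellipticity constant at least $\alpha_\theta/M^2>0$ (resp. $\alpha_\phi/M^2$) by (\ref{h-Ci3}), \emph{independently of whether $\nu$ vanishes}; this is what defuses the degeneracy allowed by (\ref{h-nu}).

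\emph{Step 2 (Fixed point).} On a closed convex bounded subset of $L^2(\Omega)\times L^2(\Omega)$ I would define a map $\mathcal T$ sending $(\bar\theta,\bar\phi)$ to the solution $(\theta,\phi)$ of the two \emph{decoupled, linear} boundary value problems obtained by freezing $(\bar\theta,\bar\phi)$ in the coefficients and in the right-hand sides of $(Q_{m,M})$. Each scalar problem is well posed by Lax--Milgram: the convection operator is skew-symmetric because $\div(\rho\mathbf{u})=0$ and $\mathbf{u}\cdot\mathbf{n}=0$ on $\partial\Omega$ by (\ref{h-u1})--(\ref{h-bord}), so it does not spoil coercivity, while the frozen, truncated right-hand sides lie in $L^r(\Omega)\subset W^{-1,\beta}(\Omega)$ by (\ref{beta}). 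Continuity of $\mathcal T$ follows from the Carath\'eodory and $L^\infty$ hypotheses (\ref{h-Ci2})--(\ref{h-Ci4}) together with dominated convergence, and compactness from the compact Rellich embedding of $H^1(\Omega)$ into $L^2(\Omega)$; Schauder's theorem then furnishes a solution $(\theta_{m,M},\phi_{m,M})$ of the genuine (truncated) nonlinear problem $(Q_{m,M})$.

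\emph{Step 3 (Uniform upper bounds via Stampacchia).} Testing each equation of $(Q_{m,M})$ with $(\theta-M)^+$ and $(\phi-M)^+$ (which vanish on $\partial\Omega$ once $M\ge\|a\|_\infty,\|b\|_\infty$, so the convective terms again drop by skew-symmetry), I would run the truncation iteration made available by $r>N/2$. For $\theta$, after discarding the favourable sink $-C_3F\,T(\theta)^2\le0$, the adverse contribution is $\int C_4\,T(\theta)\,D^+(\theta-M)^+$, bounded through $T(\theta)\le M$ by $\|C_4\|_\infty M\|D^+\|_{L^r}$: \emph{this is where low compressibility enters}. The Stampacchia estimate then takes the self-referential form $M\le\|a\|_\infty+C\big(\|C_4\|_\infty M\tau+\|C_5\|_{L^r}\big)$, solvable for a finite $M$ depending only on $DATA$ and $\tau$ precisely when $C\|C_4\|_\infty\tau<1$, which dictates the choice of $\tau$. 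The same scheme for $\phi$, using $\|D^+\|_{L^r}\le\tau$ to dominate the bad part $|c^-|\le C_7D^+$ of the reaction coefficient $c=C_6\,T(\theta)\,F-C_7D+C_8\,T(\theta)^{-1}$, produces an upper bound for $\phi$.

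\emph{Step 4 (Uniform lower bounds and conclusion).} The positive lower bound is the delicate point. For $\theta$ the nonnegative source $C_5\ge0$ in (\ref{h-Ci1}) together with $a\ge\delta$ from (\ref{h-bord2}) drives a bound via the test function $(\theta-m)^-$; for $\phi$, whose equation at the fixed point is homogeneous of degree one, $\rho\mathbf{u}\cdot\nabla\phi-\div(A_\phi\nabla\phi)+c\phi=0$, I would instead pass to $V=-\log\phi$, which satisfies $\rho\mathbf{u}\cdot\nabla V-\div(A_\phi\nabla V)+A_\phi|\nabla V|^2=c$, the favourable sign of the quadratic gradient term allowing a Stampacchia bound of $V$ from above, hence $\phi\ge m>0$. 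Choosing $m$ below and $M$ above the bounds thus obtained makes all truncations inactive, so $(\theta_{m,M},\phi_{m,M})$ solves the original $(Q)$; since $m\le\theta,\phi\le M$ gives $\theta,\phi,\theta^{-1},\phi^{-1}\in L^\infty(\Omega)$ and the $H^1$ control comes from the energy estimate, the solution lies in $\mathcal S$. \emph{The main obstacle} is the simultaneous closure of Steps 3 and 4 against the coupled, $\theta^{-1}$- and $D$-dependent reaction terms: the upper bound $M$, the ellipticity constant (through $m$), and the lower bounds all depend on one another, and it is exactly the smallness of $\tau$ that breaks this circular dependence.
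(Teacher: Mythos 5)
Your overall toolbox (truncation, fixed point, Stampacchia iteration, a logarithmic substitution) is the right one, and your Steps 1--2 are a legitimate alternative to the paper's scheme (the paper does not use Schauder on the coupled system; it builds a sequential approximation $(Q_n)$ in which $\phi_n$ is frozen in the $\theta_{n+1}$-equation and $\theta_{n+1}$ is then fed into the $\phi_{n+1}$-equation, and passes to the limit by weak compactness). The genuine gap is in Step 3, and it is quantitative, not cosmetic. Because (\ref{h-nu}) allows $\nu\equiv 0$, the ellipticity constant of your truncated operator $A_\theta=\nu+C_\theta/(T(\theta)T(\phi))$ on the set $\{\theta>M\}$ is of order $\alpha_\theta/M^2$ and \emph{degenerates as $M\to\infty$}. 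The Stampacchia bound is of the form $\sup\theta\le\|a\|_\infty+\tilde C\,\kappa_0^{-1}\|g^+\|_{L^r}$ with $\kappa_0$ the ellipticity constant, so your ``self-referential'' inequality is really $M\le\|a\|_\infty+\tilde C\,\frac{M^2}{\alpha_\theta}\bigl(\|C_5\|_{L^r}+\|C_4\|_\infty M\tau\bigr)$. The term $\tilde C M^2\|C_5\|_{L^r}/\alpha_\theta$ is quadratic in $M$ and is \emph{not} multiplied by $\tau$, so no choice of $\tau$ makes this solvable for an $M$ depending only on $DATA$; you would need an unassumed smallness of $\|C_5\|_{L^r}$ relative to $\alpha_\theta$ and $\|a\|_\infty$. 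You treat the constant $C$ in ``$M\le\|a\|_\infty+C(\|C_4\|_\infty M\tau+\|C_5\|_{L^r})$'' as independent of $M$, which is exactly what fails.

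The paper's way out --- and the idea missing from your argument --- is to apply the logarithmic substitution to the \emph{whole} equation, not only to the lower bound for $\phi$. Writing $v=\ln\zeta$ turns the singular/degenerate flux $\frac{\kappa}{\zeta}\nabla\zeta$ into $\kappa\nabla v$, so in the $v$-variable the problem is uniformly elliptic with constant $\kappa_0=\alpha_\theta/\varphi_{\text{max}}$ \emph{independent of the size of $\zeta$}; the Stampacchia machinery then yields $\|v\|_{L^\infty}\le C/\kappa_0$ with $C=C(DATA)$, i.e.\ the two-sided bound $e^{-C/\kappa_0}\le\zeta\le e^{C/\kappa_0}$ of (\ref{Linfty-1}) in one stroke (this is also why the paper's bounds are exponential, and doubly exponential for $\phi$, rather than linear as in your Step 3). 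The smallness of $\tau$ is then used only where it is really needed: to neutralize the superlinear term $C_4 D^+ u$ in $g_\theta^+$ (via the threshold $\tau_l=1/h(e^l)$ in Lemma \ref{lem0}) and, through the refined estimate (\ref{Linfty-2}), to propagate the a priori ceiling $\phi_{n+1}\le\varphi_{\text{max}}$ that keeps $\kappa_0$ fixed along the iteration. Your Step 4 already contains this substitution for $\phi$; you need to promote it to the central device for both unknowns and both bounds, after which your fixed-point framework could be made to work.
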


\begin{theo}\label{theo2}
Assume that $N=2$ and ($A_0$),($A_1$) hold. Then problem (Q) admits at least one weak solution $(\theta,\phi)$  in the class $\mathcal S$.    
\end{theo}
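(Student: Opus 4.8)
The plan is to obtain $(\theta,\phi)$ as the limit of solutions of a regularized, decoupled problem solved by a fixed point argument, and to close everything with a priori bounds that, in dimension two, do \emph{not} require $\|D^{+}\|_{L^r}$ to be small. Concretely, for an integer $m$ I would replace the singular/degenerate diffusivities $\nu+C_{\theta}/(\theta\phi)$ and $\nu+C_{\phi}/(\theta\phi)$ by $\nu+C_{\theta}(x,\cdot)/(T_m\theta\,T_m\phi)$ and its analogue, where $T_m s=\max(1/m,\min(s,m))$, and truncate the zero-order nonlinearities at level $m$ as well. Freezing $(\bar\theta,\bar\phi)$ in the coefficients and in the right-hand sides turns $(Q)$ into two linear, uniformly elliptic, non-singular equations with $L^r\subset W^{-1,\beta}$ data ($\beta>N$ by (\ref{beta})); Lemma~\ref{lemrep} together with linear elliptic theory then gives a unique $(\theta,\phi)\in (W^{1,\beta})^2\subset(L^{\infty})^2$, and the solution map $(\bar\theta,\bar\phi)\mapsto(\theta,\phi)$ is continuous and compact on a ball of $(L^{\infty}\cap H^1)^2$. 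Schauder (or Leray--Schauder) then furnishes a solution $(\theta_m,\phi_m)$ of the regularized problem.

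The heart of the matter is a priori estimates uniform in $m$, which the keyword ``Stampacchia estimates'' already announces. Two structural facts are used repeatedly: since $\div(\rho{\bf u})=0$ and ${\bf u}\cdot{\bf n}=0$ on $\partial\Omega$, testing the $\theta$-equation with $(\theta-k)^{+}$ (with $k\ge\|a\|_{L^{\infty}(\partial\Omega)}$, so the test function lies in $H^1_0$) makes the convection term $\int\rho{\bf u}\cdot\nabla\theta\,(\theta-k)^{+}=\tfrac12\int\rho{\bf u}\cdot\nabla\big((\theta-k)^{+}\big)^2$ vanish, and likewise for $\phi$. For the lower bounds I would run the analogous Stampacchia argument on negative parts (equivalently on truncations of $1/\theta$ and of $\phi$), exploiting the sign $C_5\ge0$ in the $\theta$-equation and the linearity of the $\phi$-equation in $\phi$ with reaction structure $-\phi(C_6\theta F-C_7 D+C_8\theta^{-1})$, to get $\theta,\phi\ge c_0(DATA)>0$. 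Being independent of $m$, these lower bounds ensure $T_m$ is inactive from below for $m$ large and, once the upper bounds below are in hand, make the regularized eddy viscosity coercive with an $m$-independent constant.

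The main obstacle is the upper $L^{\infty}$ bound \emph{without} smallness of $D^{+}$: in the level-set estimate for $\theta$ the compressibility term $\int_{\{\theta>k\}}C_4\,\theta\,D^{+}(\theta-k)^{+}$ is a genuine source, and in three dimensions it can be absorbed only when $\|D^{+}\|_{L^r}$ is small (this is exactly what Theorem~\ref{theo1} pays for). When $N=2$ it can be tamed unconditionally: on the super-level set $A_k=\{\theta>k\}$ one bounds it by $\|C_4\|_{L^{\infty}}\|D^{+}\|_{L^r}\|(\theta-k)^{+}\|_{L^{2r'}(A_k)}^2$ and invokes the Gagliardo--Nirenberg (Ladyzhenskaya) inequality, valid for every finite exponent in two dimensions, which produces a factor $\|\nabla(\theta-k)^{+}\|_{L^2}^{2\sigma}$ with $\sigma=1/r<1$; a Young inequality then absorbs it into the dissipation $\int C_{\theta}/(\theta\phi)\,|\nabla(\theta-k)^{+}|^2$ with a constant that depends on $\|D^{+}\|_{L^r}$ but stays finite for \emph{any} value of it, while the hypothesis $C_4^2/C_3\in L^r$ in $(A_1)$ lets the residual part be dominated by the good sink $-C_3 F\theta^2$. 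The measure $|A_k|$ then obeys a super-linear Stampacchia recursion, giving $\theta\le M(DATA)<\infty$; the same scheme for $\phi$, whose bad term is $\int C_7 D^{+}\phi(\phi-k)^{+}$, is closed using the sinks $C_6\theta F\phi$ and $C_8\theta^{-1}\phi$, and this is precisely where the explicit values $c_6,c_7,c_8$ of $(A_1)$ enter to guarantee the required numerical balance.

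With the $m$-uniform bounds $0<c_0\le\theta_m,\phi_m\le M$ and the resulting $H^1$ bound (obtained by testing with $\theta_m-\tilde a$ and $\phi_m-\tilde b$ for $H^1$-extensions $\tilde a,\tilde b$ of the boundary data), the truncations $T_m$ become inactive for large $m$, so $(\theta_m,\phi_m)$ already solves $(Q)$. Finally I would pass to the limit: weak $H^1$ convergence and, by Rellich, strong $L^p$ and a.e.\ convergence of a subsequence; the Carath\'eodory assumptions (\ref{h-Ci2})--(\ref{h-Ci4}) carry $C_{\theta}(x,\theta_m,\phi_m)\to C_{\theta}(x,\theta,\phi)$ to the limit, the lower bounds keep $1/(\theta\phi)$ and hence the diffusivities under control, and the quadratic and reaction terms converge by dominated convergence using the uniform $L^{\infty}$ bounds. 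The limit $(\theta,\phi)$ is then a weak solution of $(Q)$ lying in the class $\mathcal S$, which proves Theorem~\ref{theo2}.
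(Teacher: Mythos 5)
There is a genuine gap: your proof never identifies, let alone uses, the pointwise inequality $F\ge \tfrac13 D^2$ valid when $N=2$ (Lemma~\ref{lemF}), which is the actual linchpin of the paper's argument and the only reason assumption $(A_1)$ has any effect. In the paper, the $\theta$-equation is tamed by completing the square: $C_4D^{+}u-C_3Fu^2\le C_4D^{+}u-\tfrac{C_3}{3}(D^{+}u)^2\le \tfrac{3C_4^2}{C_3}$, so the positive part of the source is bounded by $C_5+3C_4^2/C_3\in L^r$ \emph{independently of the unknown}, and Proposition~\ref{aux1} applies with $h\equiv 0$, hence with no smallness of $\|D^{+}\|_{L^r}$. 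For the $\phi$-equation the same inequality gives $g_{\phi}\le -\tfrac{\rho\phi}{\theta}\bigl(\tfrac{c_6}{3}(\theta D)^2-c_7(\theta D)+c_8\bigr)$, and the explicit values in $(A_1)$ make the discriminant $c_7^2-\tfrac43 c_6c_8<0$, so the right-hand side is nonpositive and the maximum principle yields $\phi\le\|b\|_{L^\infty(\partial\Omega)}$ outright. You gesture at both facts (``the residual part be dominated by the good sink $-C_3F\theta^2$'', ``the required numerical balance''), but without $F\ge D^2/3$ neither claim is justified: in general $F$ may vanish where $D^{+}$ is large, so the sink $-C_3F\theta^2$ controls nothing there, and there is no way to check any ``balance'' among $c_6,c_7,c_8$ without a quantitative lower bound of $F$ in terms of $D^2$. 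This is precisely the point where the two-dimensionality enters, and it is absent from your argument.

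The substitute mechanism you propose for $\theta$ --- Ladyzhenskaya's inequality to absorb $\int_{A_k}C_4D^{+}((\theta-k)^{+})^2$ into the dissipation --- is a legitimate two-dimensional tool, but as written it does not close. First, the absorption requires a lower bound on the diffusion coefficient that is uniform in your regularization parameter $m$; with the truncated viscosity $C_\theta/(T_m\theta\,T_m\phi)$ the coercivity constant is only of order $m^{-2}$ until the $L^\infty$ bounds are known, which is circular. (The paper avoids this by the substitution $v=\ln\zeta$, which turns the degenerate diffusion into a uniformly elliptic one with constant $\kappa_0$ independent of the truncation level.) Second, the Young step leaves a residual of the form $C(\|D^{+}\|_{L^r})\|(\theta-k)^{+}\|_{L^2}^{2(1-1/r)}$, which is not of the form $C|A_k|^{\Phi}$ with $\Phi>(N-2)/N$ required by the Stampacchia recursion, so an extra argument is needed before the level-set iteration applies. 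Third, and most importantly, no analogue of this absorption is available for the $\phi$-equation: its right-hand side is $\phi$ times a quantity independent of $\phi$, so there is no quadratic sink in $\phi$ to absorb into, and the uniform upper bound on $\phi$ without smallness of $\|D^{+}\|_{L^r}$ genuinely requires the sign argument via the negative discriminant. Your construction-by-truncation-and-Schauder and the final passage to the limit are reasonable and close in spirit to the paper's iteration $(Q_n)$, but the core of Theorem~\ref{theo2} --- why $(A_1)$ and $N=2$ remove the low-compressibility restriction --- is not established.
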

In all the situations we have the following regularity result:
\begin{theo}\label{theo3}
Let $(\theta,\phi)$ be a weak solution of (Q) in the class $\mathcal S$ and assume that ($A_0$) is satisfied. We have:
\begin{itemize}
\item[i)] If ${\bf u}\in (L^\infty(\Omega))^{N}$ and $a,b$ are H\"{o}lder continuous, then $(\theta,\phi) \in (\mathcal{C}^{0,\alpha}(\overline{\Omega}))^2$, for some $\alpha > 0$
\item[ii)] Assume that in addition, the following conditions are satisfied:
\begin{align*}
&\partial \Omega,a \text{ and } b \text{ are of class } \mathcal{C}^{2,\alpha}, \quad  \rho {\bf u} \in (\mathcal{C}^{1,\alpha}(\overline{\Omega}))^N\\ 
&F \in \mathcal{C}^{0,\alpha}(\overline{\Omega}),\quad  \forall i=3,..,8: \ C_i \in \mathcal{C}^{0,\alpha}(\overline{\Omega}) \\ 
&C_{\text{ind}}(x,v,w) \in \mathcal{C}^{1,\alpha}(\overline{\Omega}\times(\mathbb R^{+})^2)
\end{align*} 
Then $(\theta,\phi) \in (\mathcal{C}^{2,\alpha}(\overline{\Omega}))^2$ and it is a classical solution of (Q).
\end{itemize}
\end{theo}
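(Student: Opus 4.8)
The plan is to exploit the fact that membership of $(\theta,\phi)$ in the class $\mathcal S$ freezes the quasilinear operator into a \emph{linear}, uniformly elliptic one with bounded coefficients, and then to run the classical elliptic regularity machinery: De Giorgi--Nash--Moser (Stampacchia) theory for part (i) and Schauder theory for part (ii). Write
$$ A_\theta := \nu + \frac{C_\theta(x,\theta,\phi)}{\theta\phi}, \qquad A_\phi := \nu + \frac{C_\phi(x,\theta,\phi)}{\theta\phi}. $$
Since $\theta,\phi,\theta^{-1},\phi^{-1}\in L^\infty(\Omega)$ and $C_\theta,C_\phi$ obey (\ref{h-Ci3})--(\ref{h-Ci4}), both $A_\theta$ and $A_\phi$ are measurable, bounded above, and bounded below by a positive constant, so each equation of (Q) is uniformly elliptic in divergence form. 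The first key manipulation is to absorb the convection term without losing a derivative: because $\div(\rho{\bf u})=0$ by (\ref{h-u1}), one has $\rho{\bf u}\cdot\nabla\theta=\div(\theta\,\rho{\bf u})$, so the $\theta$-equation becomes
$$ -\div\big(A_\theta\nabla\theta-\theta\,\rho{\bf u}\big)=-C_3F\theta^2+C_4\theta D+C_5=:h_\theta, $$
and symmetrically for $\phi$.

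For part (i) I would apply the interior-and-boundary Hölder estimate of De Giorgi--Nash--Moser to this divergence-form equation. With ${\bf u}\in(L^\infty)^N$ the field $\theta\,\rho{\bf u}$ lies in $(L^\infty)^N\subset(L^s)^N$ for every $s>N$, while $\theta\in L^\infty$ together with (\ref{h-rho}), (\ref{h-u1}) and (\ref{h-Ci1}) gives $h_\theta\in L^r(\Omega)$ with $r>N/2$; the $\phi$-equation is handled identically. These are exactly the integrability thresholds that yield local Hölder continuity, and since $\partial\Omega$ is Lipschitz (hence satisfies an exterior measure-density condition) and $a,b$ are Hölder on $\partial\Omega$, the estimate propagates up to the boundary. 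This gives $(\theta,\phi)\in(\mathcal C^{0,\alpha}(\overline\Omega))^2$ for some $\alpha>0$.

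For part (ii) I would bootstrap, improving the regularity of $\theta$ and $\phi$ \emph{in tandem} so that the coupling between the two equations is respected. Starting from $(\theta,\phi)\in(\mathcal C^{0,\alpha})^2$ with $\theta,\phi\ge c>0$, the Lipschitz dependence of $C_\theta\in\mathcal C^{1,\alpha}$ on its arguments shows $x\mapsto C_\theta(x,\theta(x),\phi(x))$ is Hölder, whence $A_\theta\in\mathcal C^{0,\alpha}$, and under the stated smoothness of the data ($F$, $D$, the $C_i$ and $\rho{\bf u}$ Hölder) also $h_\theta,\ \theta\rho{\bf u}\in\mathcal C^{0,\alpha}$; the $\mathcal C^{1,\alpha}$ Schauder estimate for divergence-form equations with Hölder coefficients then yields $\theta\in\mathcal C^{1,\alpha}$, and likewise $\phi\in\mathcal C^{1,\alpha}$. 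Now $\nabla\theta,\nabla\phi\in\mathcal C^{0,\alpha}$, so the chain rule upgrades $A_\theta$ (and $A_\phi$) to $\mathcal C^{1,\alpha}$; rewriting the equation in non-divergence form $-A_\theta\Delta\theta+(\rho{\bf u}-\nabla A_\theta)\cdot\nabla\theta=h_\theta$ and absorbing the now-Hölder first-order terms into the right-hand side gives $-\Delta\theta=\tilde h_\theta\in\mathcal C^{0,\alpha}$. Since $\partial\Omega,a,b$ are of class $\mathcal C^{2,\alpha}$, the classical boundary Schauder estimate yields $\theta\in\mathcal C^{2,\alpha}(\overline\Omega)$, and the same for $\phi$. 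As all coefficients and right-hand sides are then continuous, $(\theta,\phi)$ satisfies (Q) pointwise, i.e. it is a classical solution.

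The step I expect to be the main obstacle is the bookkeeping of the solution-dependent, potentially singular diffusion coefficient $C_{\text{ind}}(x,\theta,\phi)/(\theta\phi)$ through the bootstrap: at each stage one must check, via the Carathéodory/$\mathcal C^{1,\alpha}$ hypotheses (\ref{h-Ci2})--(\ref{h-Ci4}) and the chain rule, that the regularity just gained for $(\theta,\phi)$ actually propagates to $A_\theta$ and $A_\phi$, so that the next Schauder step applies — here the lower bounds $\theta,\phi\ge c>0$ furnished by $\mathcal S$ are indispensable for keeping $1/(\theta\phi)$ and $\theta^{-1}$ smooth and for preventing the eddy viscosities from blowing up. The convection term requires the divergence-free rewriting above to avoid a loss of a derivative, and the mutual dependence of the two equations is managed by carrying $\theta$ and $\phi$ up the regularity ladder together rather than one after the other.
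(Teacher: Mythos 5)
Your proposal is correct and follows essentially the same route as the paper: part (i) by De Giorgi--Nash applied to each equation viewed as a uniformly elliptic divergence-form equation with bounded coefficients (the bounds coming from membership in $\mathcal S$), and part (ii) by iterating Schauder estimates ($\mathcal C^{0,\alpha}\to\mathcal C^{1,\alpha}\to\mathcal C^{2,\alpha}$); the paper merely packages this as an application of its scalar Proposition \ref{aux1}-ii) to each equation of (Q). Your remark that the two unknowns must be carried up the regularity ladder in tandem (so that the coefficient $C_{\theta}(x,\theta,\phi)/\phi$ is actually $\mathcal C^{1,\alpha}$ when the second Schauder step is invoked) is a point the paper passes over silently, and is a welcome clarification rather than a deviation.
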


\subsection{Discussion on the results} \label{disc}

The major feature of our work is to treat the compressible situation in a general framework. The compressible case is interesting for several applications (see for instance \cite{moha-num,davi}). However the additional 
terms of the model arising from the compressibility effects induce important complications for its analysis. Roughly speaking: these additional terms are 
responsible of an increase or decrease (depending on the signe of $D$) of the turbulence. Then the balance between the increase/decrease of the source terms of turbulence and the possible explosion/vanishing of the turbulent viscosities is difficult to control. \\   
Compared to previous works (see \cite{Lew,lafitte,moha,wu}) our basic assumption ($A_0$) made in theorem \ref{theo1} is very general, and we remark that: 
\begin{itemize}
\item[i)] We do not impose a free divergence condition on ${\bf u}$ and the mean density $\rho$ is not supposed constant. Hence our analysis can handle compressible turbulent flows. The condition of the form (\ref{lowcomp}) assumed for $\div {\bf u}$ is much 
weaker than the corresponding assumption made in \cite{lafitte}. In dimension two, under the additional condition ($A_1$) we 
obtain in theorem \ref{theo2} an existence result without any additional assumption of low compressibility. The necessity of a condition of the form  (\ref{lowcomp}) when N=3 is discuted in the Appendix.
\item[ii)] We allow the viscosity parameters $C_{\theta},C_{\phi}$ to depend on $x,\theta,\phi$ whereas in the previous works these parameters were taken constant (except in \cite{marmol,marmol2} where however an artificial regularization was made, and only the incompressible situation was studied). 
\item[iii)] We only assume weak regularity for the mean flow, i.e: ${\bf u} \in (L^3(\Omega))^N$ and $\div {\bf u} \in L^r(\Omega)$, with some $r>3/2$, whereas in the previous works it was assumed ${\bf u} \in (L^{\infty}(\Omega))^N$ and $\div {\bf u} \in L^{\infty}(\Omega)$.
Our assumption is more interesting from a mathematical point of view because it is satisfied when ${\bf u}$ is a weak solution of the Navier Stokes 
equations. Hence our work could be used for a future analysis of the full coupled system Navier-Stokes plus (Q).
\item[iv)] All the coefficients $C_i$ are allowed to depend on $x\in\Omega$. Hence our study is also a step for the analysis of  
the Low-Reynolds $k-\epsilon$ model (in \cite{wu} the Low-Reynolds $k-\epsilon$ model is studied only in the incompressible and unsteady situation). Finally the boundary values for $\theta$ and $\varphi$ are not taken constants unlike 
the previous works.  
\end{itemize}

Note also that under the additional assumption ${\bf u} \in (L^{\infty}(\Omega))^N$ we give a H\"{o}lder continuity result for 
$k$ and $\theta$. Moreover, we establish an existence result for a classical solution under some smoothness assumptions 
on the data. These regularity results seem to be completly new. \\
 
Another feature of our work is to point out that the choice $\phi=\frac{\epsilon^2}{k^3}$ is indicated even in the compressible situation: this makes the dynamic of the $\phi$ equation stable when $N=2$ and stable under additional conditions when $N=3$ 
(see the Appendix). These results improve the study of the model presented in \cite{moha}, chap. 9. \\ 

In order to establish our theorems we establish intermediate results (see Proposition \ref{aux1}), concerning the existence, positivity  and regularity properties for a weak solution
of an elliptic scalar problem (possibly degenerate and singular) of the form: 
$$
\text{(S)}  \quad\left\{
	\begin{array}{l} 
	\rho{\bf u} \cdot \nabla \zeta -\div\left( (\nu+ \frac{\kappa}{\zeta}) \nabla \zeta\right) = g(x,\zeta) \quad \text{in } \Omega \\ 
     \zeta = \zeta_0 \quad \text{on } \partial \Omega, 
	\end{array}
	\right. \\  
$$
These results have also an independent mathematical interest.

\subsection{Organization of the paper}

$\bullet \ $In section 3 we shall recall some results concerning: the truncature at a fixed level and the Stampacchia's estimates. This 
last technique takes an important role in our analysis, moreover we shall need a precise control of the estimates. Hence we shall 
present it with some details and developments. \\ 
$\bullet \ $In section 4 we introduce a sequence ($Q_n$) of problems which approximate (Q). For $n$ fixed $(Q_n)$ is a PDE system of two scalar equations of the form (S): one equation for 
the unknown $\theta_{n+1}$ and one for $\phi_{n+1}$. The point is that the unknowns $\theta_{n+1}$ and $\phi_{n+1}$ are only weakly coupled. The coupling of the two equations is essentially realized through the quantities $\theta_{n}$ and $\phi_{n}$ calculated at the previous step. Hence we shall firstly study carefully the problem (S). The major tool used here are the Stampacchia's estimates. We next prove that ($Q_n$) is well posed. Hence we obtain an approximate sequence of solutions $(\theta_{n},\phi_{n})$ for problem (Q). 
Moreover, we prove that $\theta_{n}$ and $\phi_{n}$ are uniformly bounded from above and below, which are the key estimates.\noindent \\ 
$\bullet \ $ In section 5 we use the uniform bounds established in section 4, in order to extract a converging subsequence from $(\theta_{n},\phi_{n})$. We then 
prove that the limit $(\theta,\phi)$ is a weak solution of (Q) in the class $\mathcal{S}$. \\ 
Under the additional assumption ${\bf u} \in (L^{\infty}(\Omega))^3$ we are able to use the De Giorgi-Nash Theorem and we obtain an H\"older continuity result for $\theta,\phi$. 
By assuming in addition some smothness properties for the data we can iterate the Schauder estimates and prove Theorem \ref{theo3}. \\ 
$\bullet \ $In section 6 (Appendix) we present the derivation of the $\theta-\phi$ model from the $k-\epsilon$ one. Moreover we 
justify that the choice $\phi=\frac{\epsilon^2}{k^3}$ is valid even in the compressible situation. The justification uses in particular 
a property of positivity of the function $F$. We also dicuss briefly the necessity of the low compressibility assumption when $N=3$. 
Finally we recall a generalized version of the chain rule for $G(u)$ where $G$ is a Lipchitz function and $u$ a Sobolev one.   
\section{Mathematical background}
In this section we shall recall some results concerning: the truncature at a fixed level and the Stampacchia's estimates. This 
last technique takes an important role in our analysis, moreover we shall need a precise control of the estimates. Hence we shall 
present here the technique with some details and developments.
As in the rest on the paper we denote by  $\Omega \subset \mathbb R^N$ a bounded open Lipschitz domain. These properties for $\Omega$ are always implicity assumed if they are not precised. 

\subsection{Truncatures and related properties} \label{appendixI}
The technique of Stampacchia is based on the use of special test functions which are constructed by using some truncatures. We shall 
recall some basic properties of the truncatures used in the paper. An important tool is the generalized chain rule (see Theorem \ref{stam-lip} in the Annexe). \\ 
Let $l>0$ we denote by $T_l$ the truncature function $T_l: \mathbb
R \to \mathbb R$ defined by 
\begin{equation}
T_l(s) = \quad\left\{
	\begin{array}{l} 
	l \quad \text{if } s > l \\
        s \quad \text{if } -l\leq s\leq l \\ 
        -l \quad \text{if } s < -l 
	\end{array}
	\right. \label{tronc}
\end{equation} 
Let $v \in H^1(\Omega)$.
By applying Theorem \ref{stam-lip} we see that $T_l(v)\in H^1(\Omega)
\cap L^{\infty}(\Omega)$. Moreover, if we denote by $\Omega_{v,l}$ the
set $\{ x \in \Omega \text{ s.t. } |v(x)| \leq l \}$ then we have: 
$$\nabla T_l(v)=  \quad\left\{
	\begin{array}{l} 
	\nabla v \quad \text{in } \Omega_{v,l} \\
        0 \qquad \text{elsewhere} 
	\end{array}
	\right. 
$$
Note that  $T_l(.)$ truncates both the positive and the negative large values. In some cases we need only to 
truncate in one side. For this reason we introduce the semi-truncatures $T_{l,+}$ and  $T_{l,-}$ defined by: 
$$  T_{l,+}(t)= \quad\left\{
	\begin{array}{l} 
        l \quad \text{if } t >  l \\ 
        t \quad \text{elsewhere }  
	\end{array} \right. \quad 
     T_{l,-}(t)= \quad\left\{
	\begin{array}{l} 
        -l \quad \text{if } t < -l \\ 
        t \quad \text{elsewhere }  
	\end{array} \right.
$$
We then have the decomposition: $T_l=T_{l,+}\circ T_{l,-} = T_{l,-}\circ T_{l,+}$.  \\ 
For given $v_l\in H^1(\Omega)$ and $s>0$, we shall also consider  
\begin{equation} 
\psi_s(v) = v_l - T_s(v_l) =  \quad\left\{
	\begin{array}{l} 
	 v_l - s \quad \text{in } A_{s,l}^+ \\
   v_l + s \quad \text{in }  A_{s,l}^- \\ 
         0  \quad \text{elsewhere}
	\end{array}
	\right. \label{psis}
\end{equation} 
where we have used the notations: 
\begin{equation} 
A_{s,l}^+:=\{v_l\geq  s\}, \ A_{s,l}^-:=\{v_l\leq  -s\} \text{ and } A_{s,l}= A_{s,l}^+ \cup A_{s,l}^-. \label{Asl}
\end{equation}
Let also  $\psi_{s,\pm}$ be the functions defined above (\ref{psis}) while replacing $T_s$ by $T_{s,+}$ or by $T_{s,-}$ . It is 
easy to verify that $\psi_{s,+}$ (resp. $\psi_{s,+}$) is in fact the positive (resp. the negative) part of $\psi_s$. In other 
words, we have :
\begin{align*} 
\psi_{s,+}(v_l) &= \psi_{s}^{+}(v_l) = (v_l - s) {\mathbf 1}_{A_{s,l}^+} \geq 0, 
\quad \psi_{s,-}(v_l) = \psi_{s}^{-}(v_l) =(v_l + s) {\mathbf 1}_{A_{s,l}^-} \leq 0. \\
\psi_{s}  &=  \psi_s^{+} + \psi_{s}^{-}.
\end{align*}
The function $\psi_s$ has the following properties:
\begin{lemm} \label{lem123} \noindent
\begin{itemize}
\item[i)] $\psi_s \in H^1(\Omega)$ and $\nabla \psi_s = {\mathbf 1}_{A_{s,l}} \nabla v_l$
\item[ii)] if $\gamma v_l \in L^{\infty}(\partial \Omega)$ then for all $s>\|\gamma v_l\|_{L^{\infty}(\partial \Omega)}$ we have 
$\psi_s \in H_0^1(\Omega),$
\end{itemize}
where $\gamma: H^1(\Omega) \to H^{1/2}(\partial \Omega)$ denotes the trace function.
\end{lemm}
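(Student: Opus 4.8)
The plan is to realize $\psi_s$ as a composition $G\circ v_l$ for a single piecewise-affine Lipschitz function $G$, and then to deduce everything from the generalized chain rule of Theorem~\ref{stam-lip} together with standard trace theory on the Lipschitz domain $\Omega$.

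First I would set $G(t):=t-T_s(t)$, so that $\psi_s=G(v_l)$ directly from the definition (\ref{psis}). The function $G$ is Lipschitz with constant $1$, piecewise affine, satisfies $G(0)=0$, and has a.e.\ derivative $G'(t)=\mathbf{1}_{\{|t|>s\}}$, its only corners being $t=\pm s$. Theorem~\ref{stam-lip} then gives $G(v_l)\in H^1(\Omega)$ with $\nabla G(v_l)=G'(v_l)\,\nabla v_l$ a.e. Since a Sobolev function has vanishing gradient a.e.\ on each of its level sets, $\nabla v_l=0$ a.e.\ on $\{v_l=s\}\cup\{v_l=-s\}$, so the value assigned to $G'$ at the two corners is irrelevant and $\nabla\psi_s=\mathbf{1}_{\{|v_l|>s\}}\nabla v_l=\mathbf{1}_{A_{s,l}}\nabla v_l$ a.e., which is exactly (i). Membership in $H^1$ is also clear from the pointwise bounds $|\psi_s|\le|v_l|$ and $|\nabla\psi_s|\le|\nabla v_l|$.

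For (ii) the crucial identity is that the trace commutes with $G$, i.e.\ $\gamma\big(G(v_l)\big)=G(\gamma v_l)$ a.e.\ on $\partial\Omega$. I would prove this by density: as $\Omega$ is Lipschitz, choose $v^{(n)}\in C^\infty(\overline{\Omega})$ with $v^{(n)}\to v_l$ in $H^1(\Omega)$. For each smooth $v^{(n)}$ the equality $\gamma\big(G(v^{(n)})\big)=G(v^{(n)})|_{\partial\Omega}=G(\gamma v^{(n)})$ is trivial. Then I pass to the limit: $G(v^{(n)})\to G(v_l)$ in $H^1(\Omega)$ (the $L^2$ convergence of the functions and of their gradients following from the Lipschitz bound on $G$, the chain rule of part (i), and a dominated-convergence argument that again uses $\nabla v_l=0$ on $\{|v_l|=s\}$), whence $\gamma\big(G(v^{(n)})\big)\to\gamma\big(G(v_l)\big)$ in $L^2(\partial\Omega)$ by continuity of $\gamma$; simultaneously $\gamma v^{(n)}\to\gamma v_l$ in $L^2(\partial\Omega)$ and the Lipschitz continuity of $G$ give $G(\gamma v^{(n)})\to G(\gamma v_l)$, so the two limits agree. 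Finally, when $s>\|\gamma v_l\|_{L^\infty(\partial\Omega)}$ one has $|\gamma v_l|<s$ a.e.\ on $\partial\Omega$, and since $G(t)=0$ for $|t|\le s$ this yields $\gamma\psi_s=G(\gamma v_l)=0$. Because $\Omega$ is a bounded Lipschitz domain, the characterization $H^1_0(\Omega)=\{w\in H^1(\Omega):\gamma w=0\}$ then gives $\psi_s\in H^1_0(\Omega)$.

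The chain-rule computation is routine once Theorem~\ref{stam-lip} is granted; the step that genuinely requires care—the main obstacle—is the commutation $\gamma\big(G(v_l)\big)=G(\gamma v_l)$, which is where the Lipschitz regularity of the domain really enters and which must be justified by the approximation argument above rather than asserted.
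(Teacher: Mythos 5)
Your proof of part (i) is exactly the paper's: the author simply declares it ``a direct consequence of Theorem~\ref{stam-lip}'', which is precisely your observation that $\psi_s=G(v_l)$ with $G(t)=t-T_s(t)$ Lipschitz and piecewise affine, the two corners at $t=\pm s$ being harmless because $\nabla v_l=0$ a.e.\ on the corresponding level sets. For part (ii) the paper does not argue at all --- it cites Lemma~3.3, p.~53 of Ladyzhenskaya--Ural'tseva --- whereas you supply a self-contained proof via the commutation identity $\gamma\bigl(G(v_l)\bigr)=G(\gamma v_l)$, established by smooth approximation on the Lipschitz domain and the characterization $H^1_0(\Omega)=\ker\gamma$. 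Your argument is correct: the only delicate point, the $L^2$ convergence of $G'(v^{(n)})\nabla v^{(n)}$ to $G'(v_l)\nabla v_l$ despite the discontinuity of $G'$ at $\pm s$, is handled by the same ``gradient vanishes on level sets'' fact (one should note the a.e.\ convergence, hence the dominated-convergence step, holds along a subsequence, which suffices since the limit is independent of it). What your route buys is independence from the external reference and an explicit display of where the Lipschitz regularity of $\partial\Omega$ enters (density of $C^\infty(\overline{\Omega})$ and the trace characterization of $H^1_0$); what the paper's citation buys is brevity. Substantively the two are the same result proved in the same spirit.
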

\begin{proof}
Point i) is a direct consequence of Theorem \ref{stam-lip}. Property ii) is proved in \cite{lady}, lemma 3.3 p.53.
\end{proof} 
\subsection{The Stampacchia estimates} \label{appendixII} 

The Stampacchia estimates is a general method which allows one to obtain an $L^\infty$-estimate for the weak solution of a large class 
of elliptic PDEs of the second order. The $L^\infty$-estimate presented in the original work \cite{stam} depends on various 
quantities related to the PDE problem studied, but the exact dependency is not established. 
In our analysis we need a precise control of the $L^\infty$-estimates with respect 
to some quantities (in particular with respect to the diffusion coefficient of the PDEs). Hence in the following we take over and detail  
the technique  in order to obtain a more precise $L^\infty$-estimates.  \\

The Stampacchia estimates are established by using the test functions $\psi_s$ (or $\psi_s^\pm$) defined previously, 
where in this case $v$ (resp. $v_l$) is a weak solution of the problem (resp. the sequence of problems) considered.  \\ 
For technical reasons we need a classical result concerning some relationship between $L^r$ functions and linear form on 
Sobolev spaces: 
\begin{lemm} \label{lemrep}
Let $1\leq r <\infty$ and $E \in L^r(\Omega)$. Then 
\begin{equation} \label{propE}
E \in W^{-1,\beta}(\Omega), \ \text{with } \beta=r^* = \frac{Nr}{N-r},
\end{equation}
and there exists $\tilde{E} \in (L^{\beta}(\Omega))^3$ such that: 
\begin{equation} 
\int_{\Omega} E\varphi = \int_{\Omega} \tilde{E}\cdot\nabla\varphi\quad \forall \varphi\in\mathcal{D}(\Omega), \quad \|\tilde{E}\|_{(L^{\beta}(\Omega))^3} \leq  \|E\|_{L^r(\Omega)}. \label{repE}
\end{equation}
Moreover we have: 
\begin{equation} \label{propE2}
r > \frac{N}{2} \Rightarrow \beta > N.
\end{equation}
\end{lemm}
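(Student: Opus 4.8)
The plan is to read $W^{-1,\beta}(\Omega)$ as the dual of $W_0^{1,\beta'}(\Omega)$, with $\beta'=\beta/(\beta-1)$, and to reduce the whole statement to a single exponent identity. First I would record the purely algebraic facts. From $\beta=r^{*}=\frac{Nr}{N-r}$ one gets $\frac1r=\frac1\beta+\frac1N$; taking conjugates yields $\frac{1}{(\beta')^{*}}=\frac{1}{\beta'}-\frac1N=1-\frac1\beta-\frac1N=\frac{1}{r'}$, so the Sobolev conjugate of $\beta'$ is exactly $r'$, i.e. $(\beta')^{*}=r'$. Moreover $r>1$ forces $\beta'<N$, so the power-type Sobolev embedding is available. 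Finally, since $\beta=\frac{Nr}{N-r}$ is increasing in $r$ on $(0,N)$ and equals $N$ at $r=N/2$, the implication $r>N/2\Rightarrow\beta>N$ of (\ref{propE2}) is immediate; this settles (\ref{propE2}) and pins down the regime $N/2<r<N$ relevant to the paper.

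Next I would prove the continuous inclusion (\ref{propE}). For $E\in L^{r}(\Omega)$ and $\varphi\in\mathcal D(\Omega)$, H\"older's inequality gives $|\int_{\Omega}E\varphi|\le\|E\|_{L^{r}}\,\|\varphi\|_{L^{r'}}$, and the critical Sobolev embedding $W_0^{1,\beta'}(\Omega)\hookrightarrow L^{(\beta')^{*}}(\Omega)=L^{r'}(\Omega)$, legitimate because $\beta'<N$, bounds $\|\varphi\|_{L^{r'}}$ by a multiple of $\|\nabla\varphi\|_{L^{\beta'}}$. Since $\mathcal D(\Omega)$ is dense in $W_0^{1,\beta'}(\Omega)$, the linear form $\varphi\mapsto\int_{\Omega}E\varphi$ extends continuously to $W_0^{1,\beta'}(\Omega)$ and so defines an element of $W^{-1,\beta}(\Omega)$, which is (\ref{propE}).

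For the divergence representation (\ref{repE}) I would invoke Hahn--Banach together with $L^{\beta'}$-duality. On the bounded domain $\Omega$, Poincar\'e's inequality shows that $\|\nabla\cdot\|_{L^{\beta'}}$ is an equivalent norm on $W_0^{1,\beta'}(\Omega)$, and the gradient map $\varphi\mapsto\nabla\varphi$ is then a linear isometry of $(W_0^{1,\beta'}(\Omega),\|\nabla\cdot\|_{L^{\beta'}})$ onto a closed subspace of $(L^{\beta'}(\Omega))^{N}$. The functional $\varphi\mapsto\int_{\Omega}E\varphi$, transported to this subspace, is bounded; extending it to all of $(L^{\beta'}(\Omega))^{N}$ by Hahn--Banach with the same norm and representing it through the duality $\big((L^{\beta'})^{N}\big)^{*}=(L^{\beta})^{N}$ produces a vector field $\tilde E\in(L^{\beta}(\Omega))^{N}$ with $\int_{\Omega}E\varphi=\int_{\Omega}\tilde E\cdot\nabla\varphi$ for every $\varphi$, and $\|\tilde E\|_{(L^{\beta})^{N}}$ equal to the norm of the functional.

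The only nonroutine point — the main obstacle — is the sharp norm bound $\|\tilde E\|_{(L^{\beta})^{N}}\le\|E\|_{L^{r}}$. The Hahn--Banach/duality step delivers $\|\tilde E\|_{(L^{\beta})^{N}}$ equal to $\sup\{\,|\int_{\Omega}E\varphi|:\|\nabla\varphi\|_{L^{\beta'}}\le1\,\}$, which by the two estimates above is at most $C_{S}\|E\|_{L^{r}}$, where $C_{S}$ is the constant of the critical embedding $W_0^{1,\beta'}\hookrightarrow L^{r'}$. Thus the clean inequality in (\ref{repE}) amounts to taking this embedding constant equal to $1$, i.e. normalizing the norm of $W^{-1,\beta}(\Omega)$ to be the operator norm relative to $\|\nabla\cdot\|_{L^{\beta'}}$ and using the zero-extension of $\varphi$ to $\mathbb R^{N}$ so that no extraneous domain factor survives; I would make this normalization explicit, since it is precisely where the stated constant must be justified. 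Everything else — the exponent arithmetic, H\"older, and the density of $\mathcal D(\Omega)$ — is standard.
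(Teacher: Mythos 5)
Your proof is correct and follows essentially the same route as the paper: H\"older plus the Sobolev embedding $W_0^{1,\beta'}(\Omega)\hookrightarrow L^{r'}(\Omega)$ to show the functional is bounded (your identity $(\beta')^{*}=r'$ is the same exponent computation the paper does with $P=\frac{Nr}{r(N+1)-N}$), and then the Hahn--Banach/duality representation, which is precisely the content of the Brezis proposition (IX.20) that the paper cites for (\ref{repE}). Your remark that the stated inequality with constant $1$ really carries the Sobolev embedding constant (or requires normalizing the $W^{-1,\beta}$ norm) is a fair observation the paper glosses over, but it is harmless downstream since that constant depends only on $N,r,\Omega$ and is absorbed into $DATA$.
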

\begin{proof} \noindent  
Property (\ref{propE}) is easy to prove by using the Sobolev injection Theorem together with the H\"{o}lder inequality:  
$\varphi \to \int_{\Omega} E \varphi$ is a linear form on $W_0^{1,p}$ if $p^*:=\frac{Np}{N-p} \geq \frac{r}{r-1}$. This last condition holds for 
$p=P:=\frac{Nr}{r(N+1)-N}$. Hence $P'=p/(p-1)=\frac{Nr}{N-r}=r^*$. \\ 
We next obtain (\ref{repE}) by using a classical result (see \cite{brezis} Proposition IX.20). \\ 
Finally, if we assume that $r > \frac{N}{2}$ then  $\beta=r^* >\frac{N^2/2}{N-N/2} = N$. 
\end{proof}

The Stampacchia technique works in two steps: the first one is dependent of the problem (or the sequence of problems) studied and the second one is independent of it. 
Here the purpose is to present the key ingredients of these two steps. Because the first one is dependent on the problem studied we 
cannot present it here in its enterity, but we will consider a simple problem which contains the main technical points 
(in fact this introductive presentation will be useful to treat a more complicated class of problems in Section 4). 
Let $(v_l) \subset H^1_0(\Omega)$ be a sequence of functions satisfying: 
\begin{equation}
\int_{\Omega} \nu_l \nabla v_l \nabla \phi = \int_{\Omega} g_l \phi, \quad \forall \phi \in H^1_0, \label{pbVl}
\end{equation}
where $(\nu_l)$ is a given sequence of strictly positive bounded functions and $(g_l) \subset L^r(\Omega)$, with $r>\frac{N}{2}$. 
Let also $m_l,M_l$ denote the bounds from above and below for $\nu_l$, that is:
$$0 < m_l \leq \nu_l \leq M_l < \infty, \quad \text{a.e. in } \Omega.$$
-\textit{Step 1} \noindent \\
By testing (\ref{pbVl}) with $\psi_s=\psi_s(v_l)$, we obtain:
\begin{equation}
m_l \int_{\Omega} |\nabla \psi_s|^2 = \int_{\Omega} g_l \psi_s \overset{\text{by lemma }\ref{lemrep}}{=} 
\int_{\Omega} E_l \nabla \psi_s, \label{step1.1}
\end{equation}
with  $E_l \in L^{\beta}, \|E_l\|_{L^\beta} \leq \|g_l\|_{L^r}$ for some $\beta=\beta(r) > N$. \\ 
Recall also that $\text{Supp }\psi_s(v_l) \subset A_{l,s}$. Hence by using the H\"{o}lder inequality we obtain:
\begin{align*} 
\int_{\Omega} E_l \nabla \psi_s &\leq \|\nabla \psi_s\|_{L^2} (\int_{A_{l,s}}|E_l|^2)^{\frac{1}{2}} 
\leq  \|\nabla \psi_s\|_{L^2} \|E_l\|_{L^\beta} (\int_{A_{l,s}} 1)^{\frac{1}{2(\frac{\beta}{2})'}} \\ 
&\leq \|\nabla \psi_s\|_{L^2} \|E_l\|_{L^\beta} |A_{l,s}|^{\frac{\beta-2}{2\beta}}.
\end{align*}
Consequently (\ref{step1.1}) leads to: 
\begin{equation}
\|\psi_s\|^2_{H^1_0(\Omega)^2} \leq C |A_{l,s}|^{\Phi}, \quad \text{whith } C>0, \Phi>\frac{2}{2^{*}}=\frac{N-2}{N}.\label{keyStam} 
\end{equation}
{\bf This is the key estimate} needed to pass at the second step which is independent of the problem studied. \noindent \\ 
Note that with the particular sequence of problems (\ref{pbVl})  chosen here, the constants $C$ and $\Phi$ are: 
\begin{equation}
C=\frac{\|g_l\|_{L^r}^2}{m_l^2}, \quad \Phi=\frac{\beta-2}{\beta}. \label{exemple}
\end{equation}
Hence $\Phi$ does not depend on $l,s$. Moreover if we assume that ($g_l$) is uniformly bounded in the $L^r$-norm and 
that ($m_l$) is uniformly bounded from above by a strictly positive constant, then $C$ is also independent on $l,s$. \\ 
This is an important point because as we will see it hereafter, an estimate (\ref{keyStam}) with $C$ and $\Phi$ independent on 
$l,s$ leads to a uniform $L^{\infty}$ bound for ($v_l$).\\ 
-\textit{Step 2} \noindent \\ 
Assume that we have obtained (\ref{keyStam}). We can obtain an $L^{\infty}$-estimate for $v_l$ as follows. \\  
Let $2^{*}=\frac{2N}{N-2}$ be the Sobolev exponent associated to 2 in dimension $N$. 
By using the Poincar\'e-Sobolev inequality we have: 
\begin{equation}
\big( \int_{A_{l,s}} |\psi_s|^{2^{*}}\big)^{2/2^{*}} \leq C_1 
\|\psi_s\|_{H^1_0(\Omega)}^2, \quad  C_1=C_1(|\Omega|) \label{aII.1}
\end{equation}
Let now $t > s$. It is clear that $A_{l,t} \subset A_{l,s}$ and
consequently 
\begin{equation}
\big( \int_{A_{l,s}} |\psi_s|^{2^{*}}\big)^{2/2^{*}} \geq 
\big( \int_{A_{l,t}} |\psi_s|^{2^{*}}\big)^{2/2^{*}} \geq 
\big( \int_{A_{l,t}} |t-s|^{2^{*}}\big)^{2/2^{*}} \geq 
|t-s|^2 \big|A_{l,t}\big|^{2/2^{*}} \label{aII.2}
\end{equation}
We set 
\begin{equation} 
\chi(s) := \big|A_{l,s}\big|, \quad \forall s \geq 0. \label{defChi}
\end{equation}
For fixed $l$, $\chi$ is a decreasing function, and from the
estimates (\ref{keyStam})-(\ref{aII.2}), we obtain 
$$\chi(t) \leq C_2 |\chi(s)|^{2^{*}\Phi/2}(t-s)^{-2^{*}} \quad 
\forall t > s \geq 0, \quad C_2=(C_1 C)^{2^{*}/2}.$$
Recall that we have assumed in (\ref{keyStam}) that $\Phi>\frac{2}{2^{*}}$. Hence $2^{*}\Phi/2>1$ and by using Lemma 4.1 in \cite{stam} we obtain: 
\begin{equation} 
\chi(d) = 0, \quad d=2^\frac{2^{*}\Phi}{2^{*}\Phi-2} C_2 ^{1/2^*}|\Omega|^{\frac{\Phi}{2}-\frac{1}{2^*}} < \infty. \label{borneStam} 
\end{equation} 
This property tells exactly that: 
\begin{equation}
\|v_l\|_{L^{\infty}(\Omega)} \leq d, \quad d=C_3(\Phi,N,|\Omega|) \sqrt{C}. \label{conc}
\end{equation} 
In particular $d$ does not depend on $l$ if the constants $C$ and $\Phi$ appearing in (\ref{keyStam}) 
are independent of $l$. For instance with the particular sequence of problems (\ref{pbVl}) the constants $C$ and $\Phi$ are 
given by (\ref{exemple}), and if we assume that $\|g_l\|_{L^r}\leq K, \ m_l \geq m>0$ we obtain:
$$d=\tilde{C} \frac{K}{m}, \quad \tilde{C}=\tilde{C}(\Omega,N,r).$$
\begin{rem} \label{remStam} \noindent 
\begin{itemize}
\item If you are only interested in obtaining an uniform majoration or minoration for $v_l$ then instead of (\ref{keyStam}) it 
suffices to have
\begin{equation}
\|\psi_s^{\pm}\|^2_{H^1_0(\Omega)^2} \leq C^{\pm} |A_{l,s}^{\pm}|^{\Phi^{\pm}}, \quad \text{with } C^{\pm}>0, \Phi^{\pm}>\frac{N-2}{N}. \label{keybis}
\end{equation}
In fact in this case we consider $\chi_{\pm}(s) := \big|A_{l,s}^{\pm}\big|$ instead of (\ref{defChi}). This function is decreasing and 
we obtain again (\ref{borneStam}). But now this property tells: 
$$\pm v_l(x) \leq d^{\pm} \text{ a.e. in }\Omega, \quad d^{\pm}=\tilde{C}^{\pm}(\Phi^{\pm},N,|\Omega|)\sqrt{C^{\pm}}.$$
\item Let again ($v_l$) be a sequence of functions satisfying (\ref{pbVl}) and assume that $m_l\geq m >0$. Then we have: 
\begin{align}
g_l &\leq h_{l}  \text{ and } H:=\sup_{l} \|h_l\|_{L^r}<\infty  \Rightarrow  v_l \leq d_1 \text{ a.e. in } \Omega, \label{majo}\\ 
g_l &\geq k_{l}  \text{ and } K:=\sup_{l}\|k_l\|_{L^r} <\infty \Rightarrow  v_l \geq -d_2 \text{ a.e. in } \Omega, \label{mino} \\ 
&\text{with }d_1,d_2 >0, \ d_1=\tilde{C}\frac{H}{m}, \ d_2=\tilde{C}\frac{K}{m}, \quad \tilde{C}=\tilde{C}(\Phi,N,|\Omega|). \notag 
\end{align}
The proof of (\ref{majo}) and (\ref{mino}) are obtained by taking over the first step of the technique of Stampacchia, but 
by using $\psi_s^{\pm}$ as test function instead of $\psi_s$. \\ 
In fact, the function $\psi_s^{+}$ is positive. Hence instead of (\ref{step1.1}) we have: 
 $$
m \int_{\Omega} |\nabla \psi_s^{+}|^2 = \int_{\Omega} g_l \psi_s^{+} \leq \int_{\Omega} h_l \psi_s^{+} = 
\int_{\Omega} E_{l,+} \nabla \psi_s^+,
$$
with  $E_{l,+} \in L^{\beta}, \|E_{l,+}\|_{L^\beta} \leq \|h_l\|_{L^r}\leq H$. Consequently, 
in this case we obtain  (\ref{keybis}) for the function $\psi_s^+$ , with $C^{+}=\frac{H^2}{m^2}$, and the uniform majoration (\ref{majo}) follows. \\ 
The relation (\ref{mino}) can be proven by using $\psi_s^{-}$ as test function in (\ref{pbVl}). In fact we remark now  
that $\psi_s^{-}$ is negative. Hence instead of  (\ref{step1.1}) we obtain:
 $$
m \int_{\Omega} |\nabla \psi_s^{-}|^2 = \int_{\Omega} g_l \psi_s^{-} \leq \int_{\Omega} k_l \psi_s^{-} = 
\int_{\Omega} E_{l,-} \nabla \psi_s^-,
$$ 
with  $E_{l,-} \in L^{\beta}, \|E_{l,-}\|_{L^\beta} \leq \|k_l\|_{L^r} \leq K$. Consequently we now obtain  (\ref{keybis}) for the function $\psi_s^-$, with $C^{-}=\frac{K^2}{m^2}$. This implies: $-v_l \leq d_2$ and consequently $v_l \geq -d_2$.
\end{itemize}
\end{rem}  
\section{Approximate sequence and estimates}

Let $g\in H^{1/2}(\partial\Omega)$, we denote by $\mathcal R g$ its harmonic lifting, that is: 
$$\mathcal R g \in H^1(\Omega), \mathcal R g = g \text{ on  }\partial\Omega \ \text{  and  } \Delta R g = 0 \text{ on }\Omega.$$
We define the functions $\theta_0$ and $\phi_0$ by the formula:
\begin{equation}
\theta_0:=\mathcal R a \quad \phi_0:=\mathcal R b. \label{etape1}
\end{equation}
Hence, by the using the maximum principle (see \cite{brezis} p.189 and \cite{galia}) together with the condition (\ref{h-bord2}) we obtain: 
\begin{equation}
0< \delta \leq \theta_0 \leq \|a\|_{L^{\infty}(\partial \Omega)}, \quad \delta \leq \phi_0 \leq \|b\|_{L^{\infty}(\partial \Omega)}. \label{etape1-1}
\end{equation} 

Let now $n\geq 0$, ($\theta_n,\phi_n$) be given and 
$$
C_{\theta}^{(n)}(.):=C_{\theta}(.,\theta_n(.),\phi_n(.)), \quad  C_{\phi}^{(n)}(.):=C_{\phi}(.,\theta_n(.),\phi_n(.)).
$$ 
In order to construct an approximate solution $(\theta_{n+1}, \phi_{n+1})$ for problem (Q), we introduce the following system:
 $$
\text{($Q_n$)}  \quad\left\{
	\begin{array}{l} 
	\rho{\bf u} \cdot  \nabla \theta_{n+1} -\div\left( (\nu + \frac{C_{\theta}^{(n)}}{\theta_{n+1} \phi_n}) \nabla \theta_{n+1}\right) = g_{\theta}^{(n)}\quad \text{in } \Omega, \\ 
     \rho{\bf u} \cdot \nabla \phi_{n+1} -\div\big( (\nu+\frac{C_{\phi}^{(n)}}{\theta_{n+1}\phi_{n+1}}) \nabla \phi_{n+1}\big)
= g_{\phi}^{(n)} \quad \text{in }\Omega ,\\ 
	 \theta_{n+1} = a \text{ and } \phi_{n+1} = b \quad \text{on } \partial \Omega,
	\end{array}
	\right. 
$$  
where we used the notations: 
$$g_{\theta}^{(n)}:=C_5- C_3 F \theta_{n+1}^2+C_4 \theta_{n+1}D, \quad 
g_{\phi}^{(n)}:=-\phi_n(C_8\theta_{n+1}^{-1}+C_6\theta_{n+1} F-C_7 D).
$$ 
For $n\in\mathbb N$ we denote by ($H_n$) the following condition: 
$$
\text{($H_n$)}  \quad\left\{
	\begin{array}{l}
	  \theta_n,\phi_n,\theta_n^{-1},\phi_n^{-1} \in L^{\infty}(\Omega), \\ 
	  \theta_n,\phi_n \geq 0.
	  \end{array}
	\right.
$$
Let $\varphi_{\text{max}}$ be a fixed real number such that: 
\begin{equation}
\varphi_{\text{max}}>\|b\|_{L^{\infty}(\partial \Omega)}. \label{phimax}
\end{equation}
We shall also consider the condition: 
$\text{($K_n$)} := \text{($H_n$)} \ + \  (\phi_n \leq \varphi_{\text{max}}).$

Note that (\ref{etape1-1}) shows that the condition ($K_n$) is statisfied for $n=0$. We will
prove in the sequel that under condition ($K_n$) we can obtain a weak solution
$(\theta_{n+1}, \phi_{n+1})$ for problem ($Q_n$), with moreover
$(\theta_{n+1}, \phi_{n+1})$ satisfying the condition ($K_{n+1}$). This last property 
ensures the right definition of an approximate sequence. More precisely, we have:  
\begin{pro} \label{pro1} 
Let $n\in\mathbb N$ be given and assume that ($A_0$) is satisfied.
Let  also ($\theta_n,\phi_n$) be given and satisfying condition ($K_n$). There exists $\tau > 0$ depending only on $DATA$ such that if $\| D^{+} \|_{L^r(\Omega)} \leq \tau$
then problem ($Q_n$) admits at least one weak solution $(\theta_{n+1}, \phi_{n+1}) \subset  \big(H^1(\Omega)\times L^{\infty}(\Omega) \big)^2 $. \\ 
Moreover $(\theta_{n+1}, \phi_{n+1})$ satifies condition ($K_{n+1}$) and the estimates: 
\begin{eqnarray}
0 < \theta_{\text{min}} \leq &\theta_{n+1} & \leq \theta_{\text{max}}, \label{est1_prop} \\ 
0 < \phi_{\text{min}} \leq &\varphi_{n+1}&\leq \varphi_{\text{max}}, \label{est2_prop}
\end{eqnarray}
where $\varphi_{\text{max}}$ was fixed in (\ref{phimax}) and $\phi_{\text{min}}, \theta_{\text{min}}$, $\theta_{\text{max}}$ are positive 
numbers depending on $DATA$, but not on $n$.
\end{pro}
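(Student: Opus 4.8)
The plan is to exploit the \emph{triangular} structure of $(Q_n)$. The first equation involves $\theta_{n+1}$ alone, the previous iterate $\phi_n$ being frozen inside the diffusion coefficient, and once $\theta_{n+1}$ is known the second equation is a scalar problem for $\phi_{n+1}$ whose right-hand side $g_\phi^{(n)}$ no longer contains the unknown. Each equation is of the singular type (S), with $\kappa=C_\theta^{(n)}/\phi_n$ (resp. $\kappa=C_\phi^{(n)}/\theta_{n+1}$); by $(K_n)$ and (\ref{h-Ci3}) these satisfy $\kappa\ge\alpha_\theta/\varphi_{\text{max}}>0$ (resp. $\kappa\ge\alpha_\phi/\theta_{\text{max}}>0$). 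I would therefore solve $(Q_n)$ in two steps, first $\theta_{n+1}$ together with both of its bounds, then $\phi_{n+1}$, and finally read off $(K_{n+1})$. For existence of each scalar problem I would first regularize the singularity, replacing $1/\theta$ by $1/\max(\theta,\epsilon)$ and truncating the quadratic reaction, so as to obtain uniformly elliptic coefficients; for fixed $\epsilon$ a Schauder fixed point (freeze the coefficient and the reaction at a function $w$, solve the resulting linear problem, which is well posed since by Lemma \ref{lemrep} the $L^r$ data define elements of $W^{-1,\beta}$, and use the compact embedding $H^1\hookrightarrow L^2$) produces a weak solution. The a priori bounds below, being uniform in $\epsilon$, then let me pass to the limit $\epsilon\to0$ and recover a solution in the class $\mathcal S$.

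The heart of the matter is the two-sided bounds, obtained by adapting the Stampacchia estimates of Section 3 (Remark \ref{remStam}). Two structural facts make this work: the convection term disappears from the energy identities because $\div(\rho{\bf u})=0$ and ${\bf u}\cdot{\bf n}=0$ (see (\ref{h-u1}),(\ref{h-bord})), and the boundary data are positive, $a,b\ge\delta$, so that the harmonic liftings (\ref{etape1})--(\ref{etape1-1}) are bounded away from $0$. The apparent degeneracy of the coefficient $\nu+\kappa/\theta$ as $\theta$ grows is handled by the logarithmic change of variables $\sigma=\log\theta$: since the flux becomes $(\nu\theta+\kappa)\nabla\sigma$, the principal part is then uniformly coercive with the $\theta$-independent constant $\kappa\ge\kappa_0>0$. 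With this device the \emph{lower} bounds $\theta\ge\theta_{\text{min}}>0$ and $\phi\ge\phi_{\text{min}}>0$ are robust: on the region where the unknown is small the reaction is essentially $C_5\ge0$ (resp. has the favorable sign), so the minoration (\ref{mino}) applied to the transformed equation bounds $\sigma$ from below and yields positive lower bounds depending only on $DATA$. The \emph{upper} bound on $\phi_{n+1}$ is equally clean, since $g_\phi^{(n)}$ does not involve the unknown and lies in $L^r$; the majoration (\ref{majo}) then gives a finite bound for $\log\phi_{n+1}$.

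The \textbf{main obstacle} is the upper bound on $\theta_{n+1}$. Here the logarithmic substitution is unavailable, as it would turn the reaction $C_4\theta_{n+1}D$ into an exponentially growing source; one must retain the weight $1/\theta$ and work directly with a $\sqrt{\theta}$-type test function, which is precisely where the coercivity appears to degenerate as $\theta_{\text{max}}\to\infty$. The expansion term $C_4\theta_{n+1}D$ grows like the unknown on the super-level sets and, paired against the $1/\theta$-weighted coercivity, produces a super-linear (of quartic type in the $\sqrt{\theta}$ variable) contribution; the favorable dissipation $-C_3F\theta_{n+1}^2$ cannot be exploited because $F$ has no positive lower bound under $(A_0)$. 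This is exactly where low compressibility enters: I would bound $g_\theta^{(n)}\le C_5+C_4\,\theta_{n+1}D^{+}$ and choose $\tau$, depending only on $DATA$, so small that $\|C_4D^{+}\|$ falls below the threshold for which the super-level-set recursion of Step 2 of the Stampacchia technique still terminates, producing $\theta_{n+1}\le\theta_{\text{max}}$ with $\theta_{\text{max}}$ independent of $n$.

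The same smallness forces $\phi_{n+1}\le\varphi_{\text{max}}$: since all other terms of $g_\phi^{(n)}$ are nonpositive one has $g_\phi^{(n)}\le\varphi_{\text{max}}\|C_7\|_{L^\infty}D^{+}$, whose $L^r$ norm is bounded by a $DATA$-constant times $\tau$, so for $\tau$ small the resulting majoration of $\log\phi_{n+1}$ keeps $\phi_{n+1}\le\|b\|_{L^\infty(\partial\Omega)}\,e^{O(\tau)}\le\varphi_{\text{max}}$, using $\varphi_{\text{max}}>\|b\|_{L^\infty(\partial\Omega)}$ from (\ref{phimax}). Taking $\tau$ to be the smaller of the two thresholds, I would assemble the four inequalities into (\ref{est1_prop})--(\ref{est2_prop}); since $\theta_{n+1},\phi_{n+1}$ and their inverses are then in $L^\infty$, both are nonnegative, and $\phi_{n+1}\le\varphi_{\text{max}}$, condition $(K_{n+1})$ holds and $(\theta_{n+1},\phi_{n+1})\in(H^1\cap L^\infty)^2\subset\mathcal S^2$. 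All constants depend on $DATA$ only, the sole $n$-dependence (through $C_\theta^{(n)},C_\phi^{(n)},\phi_n$) being absorbed into the uniform quantities $\alpha_\theta,\alpha_\phi,\varphi_{\text{max}}$ and $\theta_{\text{max}}$.
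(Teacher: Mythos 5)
Your overall architecture matches the paper's: decouple $(Q_n)$ into two scalar problems of type (S), solve the $\theta$-equation first with $\phi_n$ frozen, then the $\phi$-equation (whose source no longer contains the unknown), use $\div(\rho{\bf u})=0$ and ${\bf u}\cdot{\bf n}=0$ to kill the convection term, get lower bounds from the logarithmic substitution plus the Stampacchia minoration, and close the upper bound on $\phi_{n+1}$ via a self-improving inequality of the form $\|\phi_{n+1}\|_{L^\infty}\leq\|b\|_{L^\infty(\partial\Omega)}+K\|D^{+}\|_{L^r}\|\phi_{n+1}\|_{L^\infty}$ together with (\ref{phimax}). All of that is consistent with the paper's proof via Proposition \ref{aux1}.

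There is, however, a genuine gap exactly at the point you flag as ``the main obstacle'', the upper bound on $\theta_{n+1}$. Your proposed resolution --- abandon the logarithmic variable, test with a $\sqrt{\theta}$-type function, and ``choose $\tau$ below the threshold for which the super-level-set recursion still terminates'' --- is circular as stated: the key estimate (\ref{keyStam}) needs a constant $C$ independent of the level $s$, but with the superlinear source $C_4D^{+}\theta_{n+1}$ that constant can only be controlled in terms of $\|\theta_{n+1}\|_{L^\infty}$, i.e.\ in terms of the very bound you are trying to produce; you never explain how the threshold for $\tau$ can be fixed before $\theta_{\max}$ is known. You also assert that the logarithmic substitution is ``unavailable'' here because it turns $C_4\theta D$ into an exponential source; in fact the paper does use it, and the exponential growth is precisely what the truncation is for. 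The paper's mechanism (Lemma \ref{lem0}) is: set $v=\ln\zeta$, truncate at level $l$ so the source term is $g(x,e^{T_l(v)})$, and impose $\|\gamma_2\|_{L^r}\leq\tau_l:=1/h(e^l)$ so that $\gamma_2\,h(e^{T_l(v_l)})$ has $L^r$-norm at most $1$; the Stampacchia bound $\|v_l\|_{L^\infty}\leq\Lambda$ then involves only $\gamma,\gamma_1+1$ and is \emph{independent of $l$}. The circularity is broken by the self-consistent choice $l=K:=[\Lambda]+1$ and $\tau:=\tau_K$: then $\|v_K\|_{L^\infty}\leq K$, the truncation is inactive, and $v_K$ solves the untruncated problem, yielding $\theta_{\max}=e^{K}$ depending only on $DATA$ and $\phi_{\text{max}}$. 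Without this (or an equivalent fixed-point-in-the-truncation-level device, e.g.\ a continuity/degree argument on your $\epsilon$-regularized family), your Step on $\theta_{n+1}$ does not close, and since the bound $\theta_{\max}$ feeds into the coercivity constant $\tilde{\kappa}_0=\alpha_\phi/\theta_{\max}$ of the $\phi$-equation, the rest of the proof inherits the gap.
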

\begin{rem}
Proposition \ref{pro1} is the key result that will be used later on to prove Theorem \ref{theo1}, whereas for Theorem \ref{theo2} we shall establish and use a more simple version of this proposition (see Subsection \ref{theo2D}).
\end{rem}
In order to prove the proposition we establish intermediate results.
\subsection{Auxiliary results}

Let $n\in \mathbb N$ and ($\theta_n,\phi_n$) be given and satisfying ($K_n$). We want to obtain ($\theta_{n+1},\phi_{n+1}$) 
by solving ($Q_n$) and in order to iterate the algorithm we also want that  ($ \theta_{n+1},\phi_{n+1}$) satisfies 
($K_{n+1}$). \\ 
Remark that the system ($Q_n$) is composed of two coupled scalar elliptic equation in divergence form, with a 
possible singular and degenerate structure. Hence the goal of this subsection is to study this last kind of scalar problem. \\ 

In order to do this, we first introduce a weight $\kappa: \mathbb R^+ \to  \mathbb R^+$ which is assumed to be mesurable and satisfying: 
\begin{equation} 
0 < \kappa_0 \leq \kappa  \leq \kappa_1 \quad \text{a.e. in } \Omega, \label{h-rho}
\end{equation}
where $\kappa_0$ and $\kappa_1$ are two given reals. \\ 
Let also $g: \Omega\times \mathbb R^+ \to \mathbb R$ be a Caratheodory function (i.e. $\forall u \in \mathbb R^+: $ 
$x\to g(x,u)$ is measurable, and for a.a. $x\in\Omega: \ u\to g(x,u) $ is continuous).

Let us consider the scalar problem:
$$
\text{(S)}  \quad\left\{
	\begin{array}{l} 
	\rho{\bf u} \cdot \nabla \zeta -\div\left( (\nu+ \frac{\kappa}{\zeta}) \nabla \zeta\right) = g(x,\zeta) \quad \text{in } \Omega \\ 
     \zeta = \zeta_0 \quad \text{on } \partial \Omega, 
	\end{array}
	\right. \\  
$$
where $\zeta_0 \in H^{1/2}(\partial \Omega)\cap L^{\infty}(\partial \Omega),\ \zeta_0\geq \delta >0 $ a.e. in $\delta \Omega$, is given. We always assume that $\rho,\mathbf{u},\nu,\Omega$ which appear in (S) satisfy their corresponding conditions in ($A_0$).

Recall that we allow $\nu \equiv 0$ in ($A_0$). Hence problem (S) may degenerate (i.e. the viscosity $\nu+ \frac{\kappa}{\zeta}$ may vanish) when $\zeta \to \infty$. Moreover (S) is singular (i.e. the viscosity tends to infinity) when $\zeta \to 0$.  
	
We want now to find sufficient additional conditions for $g$ that guarantee the existence of a bounded positive weak solution 
for problem (S). Hence we shall consider:
\begin{align}
\gamma(x)&:=\sup_{u\in[0,1]}|g^{-}(x,u)| \in L^{r}(\Omega), \label{G2} \\ 
g^{+}(x,u) &\leq \gamma_1(x) + \gamma_2(x) h(u), \label{G3}
\end{align}
 where $\gamma_1,\gamma_2 \in L^{r}(\Omega)$ and  $h: \mathbb R^{+} \to\mathbb R^{+}$ is continuous. 
In fact, more than establishing only the existence of a bounded positive solution for (S), we are interested in obtaining  
some uniform (with respect to $\kappa_1$) bounds from above and below and some regularity results. We have:  
\begin{pro} \label{aux1} \noindent 
\begin{itemize}
\item[i)] 
Let $\kappa$ satisfying (\ref{h-rho}) and $g: \Omega\times \mathbb R^+ \to \mathbb R$ be a Caratheodory function satisfying 
(\ref{G2}), (\ref{G3}). There exists a real $\tau > 0$ depending on $\kappa_0, h$ such that if $\|\gamma_2 \|_{L^r(\Omega)} \leq \tau$  then there exists a weak solution $\zeta \in
H^1(\Omega)\cap L^{\infty}(\Omega)$ for problem (S). 
Moreover we have:
\begin{equation}  
0 < \zeta_{\text{min}} \leq \zeta \leq \zeta_{\text{max}}, \quad \zeta_{\text{min}}=e^{-\frac{C}{\kappa_0}}, \zeta_{\text{max}}=e^{\frac{C}{\kappa_0}}, \label{Linfty-1}
\end{equation}
where $C$ depends only on $\gamma,\gamma_1,\Omega,r,N,\zeta_0$. In particular $\zeta_{\text{min}}$ and $\zeta_{\text{max}}$ are independent of  $\kappa_1$. \\ 
In addition, the following extended \footnote{when $g^+ \equiv 0$ it is a maximum principle} maximum principle holds: 
\begin{equation}
 \|\zeta\|_{L^{\infty}(\Omega)} \leq \|\zeta_0\|_{L^{\infty}(\partial \Omega)} +  \tilde{C} \frac{\|g^+\|_{L^r}}{\kappa_0}\|\zeta\|_{L^{\infty}(\Omega)}, \ 
\tilde{C}=\tilde{C}(\Omega,N,r,\gamma_1). \label{Linfty-2}
\end{equation}
\item[ii)] Assume that in addition ${\bf u}\in (L^{\infty}(\Omega))^N$ and $\zeta_0$ is H\"{o}lder continuous. Then 
$\zeta \in \mathcal{C}^{0,\alpha}(\overline{\Omega})$ for some $0<\alpha<1$. Moreover if $\partial \Omega$ is of class 
$\mathcal{C}^{2,\alpha}, \ g\in\mathcal{C}^{0,\alpha}(\overline{\Omega}\times\mathbb R^{+}), \rho{\bf u}\in 
(\mathcal{C}^{1,\alpha}(\overline{\Omega}))^N, \kappa \in \mathcal{C}^{1,\alpha}(\overline{\Omega})$ and $\zeta_0\in\mathcal{C}^{2,\alpha}(\partial \Omega)$, then 
$\zeta \in \mathcal{C}^{2,\alpha}(\overline{\Omega})$ and it is a classical solution of (S).
\end{itemize}
\end{pro}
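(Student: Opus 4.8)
The plan is to first establish the a priori two–sided bounds (\ref{Linfty-1})--(\ref{Linfty-2}), which are the heart of the statement, then to construct an actual solution by a regularization and fixed–point scheme on which these bounds hold uniformly, and finally to deduce the regularity part ii) from the bounds by standard elliptic theory. The decisive tool for the bounds is a logarithmic change of variable. Writing $w:=\log\zeta$, the singular diffusion becomes harmless: since $\frac{\kappa}{\zeta}\nabla\zeta=\kappa\nabla w$ and $\nabla\zeta=\zeta\nabla w$, testing the weak form of (S) with the truncation $\psi_s^{+}(w)$ (admissible in $H_0^1$ by Lemma \ref{lem123} for $s$ above the boundary values of $w$, which are finite since $\delta\le\zeta_0\le\|\zeta_0\|_{L^\infty}$) makes the convective term vanish: $\int\rho{\bf u}\cdot\nabla\zeta\,\psi_s^{+}(w)=\int\rho{\bf u}\cdot\nabla(G(w))=0$ for a primitive $G$, using $\div(\rho{\bf u})=0$ and ${\bf u}\cdot{\bf n}=0$ (assumptions (\ref{h-u1}),(\ref{h-bord})). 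The diffusion term becomes $\int_{\{w\ge s\}}(\nu\zeta+\kappa)|\nabla w|^2\ge\kappa_0\|\nabla\psi_s^{+}(w)\|_{L^2}^2$, so the \emph{effective ellipticity constant is $\kappa_0$, with no trace of the upper bound $\kappa_1$}; this is precisely what produces the $\kappa_1$–independence asserted in (\ref{Linfty-1}).

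With this structure the Stampacchia machinery of Remark \ref{remStam} applies to $v_l=w$ with ellipticity $m=\kappa_0$. For the lower bound I test with $\psi_s^{-}(w)$: on $\{w\le-s\}$ one has $\zeta=e^{w}\le 1$, so $g\ge-\gamma$ with $\gamma\in L^r$ by (\ref{G2}), and (\ref{mino}) yields $\inf w\ge-C/\kappa_0$, i.e. $\zeta\ge e^{-C/\kappa_0}$. For the upper bound I test with $\psi_s^{+}(w)$: on $\{w\ge s\ge 0\}$ one has $\zeta\ge 1$ and $g^{+}\le\gamma_1+\gamma_2 h(\zeta)$ by (\ref{G3}); provisionally assuming $\zeta\le\zeta_{\text{max}}$ lets me bound $h(\zeta)\le\sup_{[0,\zeta_{\text{max}}]}h$, so the source lies in $L^r$ with norm $\le\|\gamma_1\|_{L^r}+(\sup h)\|\gamma_2\|_{L^r}$, and (\ref{majo}) returns $\sup w\le C/\kappa_0$, hence $\zeta\le e^{C/\kappa_0}$ (the boundary contribution and all data constants being absorbed into $C$). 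The only circularity, namely that $\sup h$ depends on $\zeta_{\text{max}}$, is closed by the smallness hypothesis: choosing $\tau$ small enough (depending on $\kappa_0$ and $h$, exactly as stated) makes the output bound consistent with the provisional one, which gives (\ref{Linfty-1}). The extended maximum principle (\ref{Linfty-2}) is obtained more directly, testing with $\psi_s^{+}(\zeta)$ itself and using $\frac{\kappa}{\zeta}\ge\frac{\kappa_0}{\|\zeta\|_{L^\infty}}$, so that the weaker effective ellipticity $\kappa_0/\|\zeta\|_{L^\infty}$ generates the $\|\zeta\|_{L^\infty}$ factor on the right.

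To produce a genuine solution I would regularize (S) into a non–singular, non–degenerate, uniformly elliptic problem (for instance replacing $\frac{\kappa}{\zeta}$ by $\frac{\kappa}{T_M(\zeta^{+})+\epsilon}$ and truncating the nonlinearity), and solve it by Schauder's fixed–point theorem: freezing the argument of the coefficient and of $g$ gives a linear convection–diffusion equation solvable by Lax--Milgram, the convection being coercive–neutral since $\int\rho{\bf u}\cdot\nabla\zeta\,\zeta=\frac12\int\rho{\bf u}\cdot\nabla(\zeta^2)=0$, while ${\bf u}\in(L^3(\Omega))^N$ makes the convective form continuous on $H_0^1$ through Sobolev embedding. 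The bounds of the previous paragraph apply verbatim to the regularized solutions and are uniform in $\epsilon,M$ (and in $\kappa_1$); hence once $\epsilon<\zeta_{\text{min}}$ and $M>\zeta_{\text{max}}$ the regularization is inactive, and an $H^1$ energy estimate (available because the two–sided bound forces the coefficient to be $\ge\kappa_0/\zeta_{\text{max}}$) lets me extract a weak limit solving (S). For part ii), the two–sided bound makes $\nu+\frac{\kappa}{\zeta}$ uniformly elliptic and bounded, so with ${\bf u}\in(L^\infty(\Omega))^N$ and $L^r$ data, $r>N/2$, the De Giorgi--Nash--Moser theory gives $\zeta\in\mathcal{C}^{0,\alpha}(\overline{\Omega})$; under the extra smoothness hypotheses the coefficient and the right–hand side become Hölder, and a Schauder bootstrap (first $\mathcal{C}^{1,\alpha}$ from the divergence form, then $\mathcal{C}^{2,\alpha}$ up to the boundary using $\partial\Omega,\zeta_0\in\mathcal{C}^{2,\alpha}$) upgrades $\zeta$ to a classical solution.

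The main obstacle is the a priori $L^\infty$ control, and everything hinges on two points handled above: keeping the estimate independent of $\kappa_1$, which the logarithmic substitution achieves by converting the singular viscosity into the clean lower bound $\nu\zeta+\kappa\ge\kappa_0$, and simultaneously taming the possibly superlinear source encoded by $h$, which is what forces the smallness threshold $\tau$ on $\|\gamma_2\|_{L^r}$ together with the self–consistent choice of $\zeta_{\text{max}}$. Once the uniform two–sided bounds are secured, the existence via fixed point and the regularity via De Giorgi/Schauder are comparatively routine.
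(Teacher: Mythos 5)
Your a priori estimates are essentially the paper's: the same logarithmic substitution $w=\log\zeta$, the same observation that the effective ellipticity becomes $\nu\zeta+\kappa\ge\kappa_0$ (hence the independence of $\kappa_1$), the same Stampacchia iteration with $\psi_s^{\pm}(w)$ and the cancellation of the convective term via a primitive plus $\div(\rho{\bf u})=0$, ${\bf u}\cdot{\bf n}=0$ (cf.\ Lemma \ref{lemaI}), and the same mechanism for fixing $\tau$: the $\gamma_2 h$ contribution is tamed by taking $\tau$ below the reciprocal of $\sup h$ on the provisional range, the resulting bound is independent of that range, and the loop closes (the paper implements this self-consistency through the truncation level $l=K=[\Lambda]+1$ in Lemma \ref{lem0}). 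Your derivation of (\ref{Linfty-2}) by testing directly with $\psi_s^{+}(\zeta)$ and using the ellipticity $\kappa_0/\|\zeta\|_{L^{\infty}}$ is actually more direct than the paper's decomposition $\zeta=\zeta_1+\zeta_2$ (maximum principle for the part driven by $g^{-}$, Stampacchia for the part driven by $g^{+}$), and it delivers the stated inequality cleanly. Part ii) is handled exactly as in the paper (De Giorgi--Nash, then a Schauder bootstrap) and is fine.

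The genuine gap is in the existence construction. You regularize the original equation by replacing $\kappa/\zeta$ with $\kappa/(T_M(\zeta^{+})+\epsilon)$ and solve by a Schauder fixed point; but the uniform two-sided bounds you then want to transfer to these approximations are proved through the substitution $w=\log\zeta$, and that substitution --- the test functions $\psi_s^{\pm}(w)$ and the primitive used to kill the convection term --- only makes sense once you already know that the regularized solution is strictly positive, indeed that $\log\zeta\in H^1(\Omega)\cap L^{\infty}(\Omega)$. Nothing in your regularized problem forces $\zeta>0$: the source is allowed to be negative (only $g^{-}\ge-\gamma$ with $\gamma\in L^r$ on $\{\zeta\le1\}$ by (\ref{G2})), and a direct Stampacchia lower bound on $\zeta$ itself would use the degraded ellipticity $\kappa_0/(M+\epsilon)$ and in any case only yields $\zeta\ge\delta-C(\ldots)$, which can be negative. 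So strict positivity of the approximations is a missing step, not a routine one. The paper sidesteps it by posing the approximate problems in the log variable from the start: $(S_l)$ is an equation for $v$ with coefficient $\nu e^{T_l(v)}+\kappa\ge\kappa_0$, solvable by monotone-operator (Leray--Lions type) theory, and $\zeta=e^{v_K}$ is positive by construction, with $T_K(v_K)=v_K$ once the uniform bound is known. To repair your scheme you must either prove positivity of the regularized solutions separately or, more simply, regularize in the $w$ variable as the paper does.
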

Before proving Proposition \ref{aux1} we establish an intermediate result. In a first step we consider the change of variable $v=\ln \zeta$  in (S), and 
for $l\in \mathbb N$ we introduce a truncated version ($S_l$) of the system obtained:  
$$\text{($S_l$)}  \quad\left\{
	\begin{array}{l} 
        \rho{\bf u}\nabla e^{T_l(v)} - \div((\nu e^{T_l(v)} + \kappa) \nabla v) = g(x,e^{T_l(v)})\quad \text{in } \Omega \\ 
        v = \ln \zeta_0 \quad \text{on } \partial \Omega 
	\end{array}
	\right. 
$$
We then establish:  
\begin{lemm} \label{lem0} \noindent  
\begin{itemize}
\item[i)]  
Let  $\kappa$ satisfying (\ref{h-rho}) and $g: \Omega\times \mathbb R^+\to\mathbb R$ be a Caratheodory function satisfying 
(\ref{G2}), (\ref{G3}).    
Then, for any $l\in \mathbb N$ there exists a weak solution $v=v_l \in H^1(\Omega)\cap L^{\infty}$ for the problem ($S_l$). 
\item[ii)] 
Let $(v_l)$ be the sequence given in i) and $l\geq 1$ be a fixed integer. Then there exists $\tau_l>0$ such that if the 
function $\gamma_2$ in (\ref{G2}) satisfies  $\|\gamma_2\|_{L^r} \leq \tau_l$ then we have:
\begin{equation}
\| v_l \|_{L^{\infty}(\Omega)} \leq C \frac{\|\gamma+\gamma_1\|_{L^r}}{\kappa_0}, \quad C=C(DATA,\zeta_0). \label{bornevl}
\end{equation}
In particular $C$ is independent of $\kappa, \nu$ and $l$. 
\end{itemize}
\end{lemm}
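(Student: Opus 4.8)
The guiding idea is that the logarithmic change $v=\ln\zeta$ together with the truncation of the exponential turns $(S_l)$ into a \emph{uniformly elliptic, bounded} quasilinear problem for each fixed $l$, to which the machinery of Section~3 applies directly. Since $\nu\ge 0$ and $T_l(v)\in[-l,l]$, the principal coefficient obeys
$$\kappa_0\;\le\;\nu\,e^{T_l(v)}+\kappa\;\le\;\|\nu\|_{L^\infty}e^{l}+\kappa_1,$$
so it is bounded and, crucially, bounded below by $\kappa_0$ \emph{independently of $v,l,\nu,\kappa_1$}; the convection field $\rho{\bf u}\,e^{T_l(v)}\mathbf 1_{\{|v|\le l\}}$ is dominated by $e^{l}|\rho{\bf u}|$; and the argument $e^{T_l(v)}$ of the source lies in the fixed compact interval $[e^{-l},e^{l}]$. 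Part~i) is then an existence statement for fixed $l$ (constants may depend on $l$), while part~ii) is a \emph{quantitative} $L^\infty$ bound in which only $\kappa_0$ is allowed to enter; this uniformity is the real content.

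For i) I would freeze $w$ in the two truncated exponentials, move the convection to the right-hand side, and solve the linear elliptic problem
$$-\div\big((\nu e^{T_l(w)}+\kappa)\nabla v\big)=g(x,e^{T_l(w)})-\rho{\bf u}\cdot\nabla e^{T_l(w)},\qquad v=\ln\zeta_0\ \text{on}\ \partial\Omega,$$
by Lax--Milgram (the frozen coefficient is bounded and $\ge\kappa_0$). Testing with the $H_0^1$ function $v-\mathcal R(\ln\zeta_0)$ and integrating the convection term by parts --- here $\div(\rho{\bf u})=0$ from (\ref{h-u1}) and ${\bf u}\cdot{\bf n}=0$ from (\ref{h-bord}) move the derivative off $e^{T_l(w)}$ and the boundary term drops since the test function vanishes on $\partial\Omega$ --- yields an $H^1$ bound for the image that is uniform in $w$. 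As $H^1\hookrightarrow L^2$ compactly, the solution map $w\mapsto v$ is a compact continuous self-map of a fixed ball, and Schauder's theorem furnishes a fixed point, i.e. a weak solution $v_l\in H^1$. The $L^\infty$ bound for this fixed $l$ then follows from Step~1--Step~2 of the Stampacchia scheme, the source $g(x,e^{T_l(v_l)})$ being integrable because its argument stays in $[e^{-l},e^l]$. The only delicate point is that $g^-$ is controlled by hypothesis only for arguments in $[0,1]$; for large arguments one first truncates $g^-$ at a level $-k$, solves, and lets $k\to\infty$, the passage to the limit being controlled by the one-sided estimates of part~ii). In the application to $(Q_n)$, where $g$ has at most quadratic growth in $\zeta$, $g(x,e^{T_l(v_l)})\in L^r$ outright.

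For ii) the plan is to run the one-sided Stampacchia estimates of Remark~\ref{remStam} on $v_l$, keeping only $\kappa_0$ in the ellipticity constant. Testing the weak form of $(S_l)$ with the semi-truncatures $\psi_s^{\pm}(v_l)$ (which lie in $H_0^1$ for $s>\|\ln\zeta_0\|_{L^\infty(\partial\Omega)}$ by Lemma~\ref{lem123}), the principal term gives $\kappa_0\|\nabla\psi_s^{\pm}\|_{L^2}^2$ from below, while the convection term \emph{vanishes}: writing $\nabla e^{T_l(v_l)}\,\psi_s^{\pm}(v_l)=\nabla\big(H(v_l)\big)$ for the primitive $H$ of $t\mapsto (e^{T_l(t)})'\psi_s^{\pm}(t)$ (generalized chain rule, Theorem~\ref{stam-lip}) and using $\div(\rho{\bf u})=0$, ${\bf u}\cdot{\bf n}=0$ shows $\int_\Omega\rho{\bf u}\cdot\nabla H(v_l)=0$. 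It then remains to control the source on each support:
\begin{itemize}
\item[$\bullet$] On $A_{l,s}^-=\{v_l\le -s\}$ one has $e^{T_l(v_l)}\le e^{-s}<1$, so by (\ref{G2}) $g(x,e^{T_l(v_l)})\ge g^-(x,e^{T_l(v_l)})\ge-\gamma(x)$; the minoration (\ref{mino}) then gives $v_l\ge -\,\tilde C\,\|\gamma\|_{L^r}/\kappa_0$.
\item[$\bullet$] On $A_{l,s}^+=\{v_l\ge s\}$ one has $e^{T_l(v_l)}\ge 1$, so by (\ref{G3}) $g(x,e^{T_l(v_l)})\le\gamma_1+\gamma_2\,h(e^{T_l(v_l)})\le\gamma_1+\big(\max_{[1,e^l]}h\big)\gamma_2$; choosing $\tau_l$ so small that $\big(\max_{[1,e^l]}h\big)\tau_l\le\|\gamma_1\|_{L^r}$ keeps the right-hand side bounded in $L^r$, and the majoration (\ref{majo}) gives $v_l\le \tilde C\,\|\gamma_1\|_{L^r}/\kappa_0$.
\end{itemize}
Combining the two one-sided bounds yields (\ref{bornevl}) with $C=C(DATA,\zeta_0)$ independent of $l,\kappa,\nu$, up to the boundary contribution $\|\ln\zeta_0\|_{L^\infty(\partial\Omega)}$, which depends only on $\zeta_0$ and is folded into $C$.

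The main obstacle is precisely this uniformity, and it rests on the \emph{asymmetry} between the two one-sided estimates. The negative part $g^-$ is uncontrolled for large arguments, but $\psi_s^-$ only probes the region where $e^{T_l(v_l)}\in(0,1)$, exactly the range covered by (\ref{G2}); symmetrically, the dangerous term $\gamma_2 h$ in the bound for $g^+$ lives where $e^{T_l(v_l)}\ge 1$ and carries the $l$-dependent factor $\max_{[1,e^l]}h$, which is harmless only because $\tau_l$ is allowed to depend on $l$ and can be chosen to kill it. Forcing the convection to vanish \emph{exactly} (rather than merely be absorbed), so that no upper bound on the coefficient --- and hence no dependence on $\nu$ or $\kappa_1$ --- leaks into the constant, is the other point requiring care; this is exactly what $\div(\rho{\bf u})=0$ together with ${\bf u}\cdot{\bf n}=0$ is there to guarantee.
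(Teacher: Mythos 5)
Your part ii) is essentially the paper's own argument: the same test functions after the logarithmic change of variable, the same use of the chain rule (Theorem \ref{stam-lip}, via Lemma \ref{lemaI}) together with $\div(\rho{\bf u})=0$ and ${\bf u}\cdot{\bf n}=0$ to make the convection term vanish \emph{exactly}, and the same asymmetric treatment of the source: (\ref{G2}) on $\{v_l\le -s\}$, where the argument of $g$ lies in $(0,1]$, and (\ref{G3}) plus a smallness condition on $\|\gamma_2\|_{L^r}$ calibrated to $h$ on $[1,e^l]$ on $\{v_l\ge s\}$ (the paper takes $\tau_l=1/h(e^l)$; your $\tau_l$ is the same up to normalization). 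The only cosmetic difference is that you run the two one-sided estimates (\ref{majo})--(\ref{mino}) separately, whereas the paper tests once with the two-sided $\psi_s$ and splits the integral; the constant is the same, namely $(\|\gamma\|_{L^r}+\|\gamma_1+1\|_{L^r})/\kappa_0$ times a factor depending only on $\Omega,N,r,\|\zeta_0\|_{L^{\infty}(\partial\Omega)}$. For part i) you take a genuinely different route: the paper subtracts the lifting $v_0=\ln\mathcal R\zeta_0$, absorbs the drift into the flux $\sigma(x,w,{\bf G})=(\nu e^{T_l(w+v_0)}+\kappa){\bf G}-\rho{\bf u}\,e^{T_l(w+v_0)}$, checks strict monotonicity in ${\bf G}$ (unaffected by the drift) and coercivity with constant $\kappa_0$, and invokes an off-the-shelf existence theorem for quasilinear equations in divergence form, boundedness coming from Stampacchia. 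Your Lax--Milgram-plus-Schauder scheme is more elementary and self-contained, at the price of having to verify continuity and compactness of the solution map $w\mapsto v$, which the monotone-operator route sidesteps.

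That verification is where the one real caveat lies, and it affects the paper's write-up as well: (\ref{G2})--(\ref{G3}) control $g^-$ only for arguments in $[0,1]$, so neither the paper's appeal to ``the classical growth assumptions'' for $f(x,w)=g(x,e^{T_l(w+v_0)})$ nor your dominated-convergence step for the source is literally available when the argument ranges over $(1,e^l]$. Your truncation of $g^-$ at level $-k$ is the right patch, and the $k$-uniformity of the Stampacchia bounds does hold (on the support of $\psi_s^+$ one has $g^-\psi_s^+\le 0$ whatever the truncation level, and on the support of $\psi_s^-$ the argument is in $(0,1]$ where (\ref{G2}) applies); but to remove the truncation at the end one still needs a locally integrable majorant for $g(x,\cdot)$ on compact subsets of $(1,\infty)$. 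This is harmless in the application to $(Q_n)$, where $g$ is an explicit polynomial in $u$ with $L^r$ coefficients, but it should be stated as an additional (mild) hypothesis if the lemma is to stand on its own.
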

\begin{proof} 
\begin{itemize}
\item[i)] 
By using the divergence formula we obtain, for all $w\in H^1(\Omega)$: 
\begin{eqnarray*}
\int_{\Omega} \rho{\bf u} \nabla  e^{T_l(v)} . w &= - \int_{\Omega} e^{T_l(v)}
\div (\rho{\bf u}w) + \underbrace{\int_{\partial \Omega}  e^{T_l(v)} \rho w {\bf u} \cdot {\bf n}
  d\sigma}_{=0} \\ 
&= - \int_{\Omega} e^{T_l(v)} \rho {\bf u} \cdot \nabla w - \int_{\Omega}
e^{T_l(v)} \underbrace{\div (\rho{\bf u})}_{=0} w.
\end{eqnarray*}
Let $v_0:=\ln (\mathcal R \zeta_0)$ and consider the change of variable $\tilde{v}:=v-v_0$. Then problem ($S_l$) is equivalent to find $\tilde{v}\in H^1_0(\Omega)$ such that:
\begin{equation}
-\div \sigma(x,\tilde{v},\nabla \tilde{v}) = f(x,\tilde{v}) \quad \text{in } \mathcal{D}'(\Omega), \label{quasi}
\end{equation}
where $\sigma: \Omega \times \mathbb R \times \mathbb R^N \to \mathbb R^N$ and 
$f: \Omega \times \mathbb R \to \mathbb R$ are defined by: 
\begin{align*}
\sigma(x,w,{\bf G}) &= (\nu e^{T_l(w+v_0)}+\kappa(x)){\bf G} - \rho(x){\bf u}(x) e^{T_l(w+v_0(x))}, \\ 
f(x,w) &= g(x,e^{T_l(w+v_0(x))}).
\end{align*}
We now remark that (\ref{quasi}) is a quasilinear equation in divergence form. Moreover, it is easy to see that $f$ and $\sigma$ satisfy the classical growth assumptions and $\sigma$ satisfies also the classical coercivity condition. Note that: 
\begin{align*}
\langle\sigma(x,w,{\bf G})-\sigma(x,w,{\bf G'}),{\bf G}-{\bf G'}\rangle &= (\nu e^{T_l(w+v_0)}+\kappa)|{\bf G}- {\bf G'}|^2 \\ 
&\geq \kappa_0 |{\bf G}- {\bf G'}|^2.
\end{align*}
Hence $\sigma$ is strictly monotone in the third variable. We then conclude (see for instance \cite{augs} Theorem 1.5, or \cite{lady} Theorem 8.8 page 311) that 
there exists a weak solution $\tilde{v}\in H^1_0(\Omega)$ for (\ref{quasi}). \\ 
Consequently $v_l:=\tilde{v}+v_0$ is a weak solution for $(S_l)$, that is $\forall w 
\in H^1_0(\Omega)$: 
\begin{equation}
\int_{\Omega} (\nu e^{T_l(v_l)}+\kappa) \nabla v_l \nabla w + 
\int_{\Omega} \rho {\bf u} \nabla e^{T_l(v_l)} w
  = \int_{\Omega} g(x,e^{T_l(v_l)}) w. \label{ff1}
\end{equation}
By applying Theorem 4.2 page 108 in \cite{stam} we obtain: $v_l \in L^{\infty}(\Omega)$. 
\item[ii)]
\begin{equation}
\text{Let }\tau_l:=\frac{1}{h(e^l)} > 0 \text{ and assume now that } \|\gamma_2\|_{L^r} \leq \tau_l. \label{assum1}
\end{equation} 
With this additional assumption, we are able to obtain a useful estimation for 
$\|v_l\|_{L^\infty}$.Technically we will detail a method due to Stampacchia (see Subsection \ref{appendixII} for the notations and for an introduction of the method. Here only the first step of the technique will be developed further). 
Let $s>|\ln \|\zeta_0\|_{L^{\infty}(\partial \Omega)}|$, we consider the function $\psi_s = v_l - T_s(v_l)$. 
We have (see Lemma \ref{lem123}) $\psi_s \in H^1_0(\Omega)\cap L^{\infty}(\Omega)$, and by testing (\ref{ff1}) with $\psi_s$ we obtain: 
\begin{equation} 
\int_{\Omega} (\nu e^{T_l(v_l)}+\kappa) | \nabla \psi_s |^2 + \underbrace{\int_{\Omega} \rho {\bf u} \cdot
\nabla e^{T_l(v_l)} \psi_s}_{=:I} = \underbrace{\int_{\Omega}  g(x,e^{T_l(v_l)}) \psi_s}_{=:II}. \label{xx} 
\end{equation} 
$\bullet~$We will now evaluate the terms I and II. \\ 
The term I is simplified by writing one of its integrand factors, namely $\nabla e^{T_l(v_l)} \psi_s$ as a gradient. 
More precisely we have $\nabla e^{T_l(v_l)} \psi_s=\nabla \zeta_l$, with $\zeta_l \in H^{1}(\Omega)\cap L^{\infty}$ (see Lemma \ref{lemaI} in the Appendix). 
Hence by applying the divergence formula we see that I vanishes: 
\begin{equation}
I = \int_{\Omega} \rho {\bf u} \cdot \nabla \zeta_l 
\overset{\text{div. formula}}{=}-\int_{\Omega} \zeta_l \underbrace{\div {\rho \bf u}}_{=0} + \underbrace{\int_{\partial \Omega} \rho \zeta_l {\bf u} \cdot {\bf n} d\sigma}_{=0 \text{ by } 
(\ref{h-bord})}=0. \label{estI}
\end{equation}
We next estimate the term II: 
$$
II = \int_{\Omega}  g(x,e^{T_l(v_l)})\psi_s \leq  \underbrace{\int_{A_{s,l}^+}  g^{+}(x,e^{T_l(v_l)})\psi_s}_{:=II_1} + 
\underbrace{\int_{A_{s,l}^-}  g^{-}(x,e^{T_l(v_l)})\psi_s}_{:=II_2}.
$$
Remark now that on $A_{s,l}^-$ we have $v_l \leq -s \leq 0$, which implies that $e^{T_l(v_l)} \leq 1$. Consequently by using the 
assumption (\ref{G2}) we obtain: 
\begin{equation} 
II_2 \leq \int_{\Omega} \gamma \psi_s {\mathbf 1}_{A_{s,l}^-}. \label{estII}
\end{equation}
The term $II_1$ is majorated as follows: 
\begin{equation}
II_1 \underset{\text{by }(\ref{G3})}\leq \int_{A_{s,l}^+} (\gamma_1 + \gamma_2 h(e^l)) \psi_s 
\underset{\text{by }(\ref{assum1})}\leq 
\int_{\Omega} (\gamma_1+1) \psi_s {\mathbf 1}_{A_{s,l}^+} \label{estIII}
\end{equation}
$\bullet~$ At this point by using the estimates (\ref{estI}), (\ref{estII}) and (\ref{estIII}) together with (\ref{xx}) and (\ref{h-rho}), we obtain: 
\begin{equation} 
\kappa_0 \int_{\Omega} | \nabla \psi_s |^2 \leq \int_{\Omega} 
\underbrace{(\gamma {\mathbf 1}_{A_{s,l}^-}+(\gamma_1+1) {\mathbf 1}_{A_{s,l}^+})}_{=:E} \psi_s. \label{xx1}
\end{equation}  
Note that $E\in L^r(\Omega)$ and:  
$
\|E\|_{L^r} \leq C_0, \quad C_0=\|\gamma\|_{L^r}+\|\gamma_1+1\|_{L^r}.  
$
On the other hand (see Lemma \ref{lemrep}) there exists $\tilde{E} \in (L^{\beta}(\Omega))^3$ satisfying: 
$
\|\tilde{E}\|_{(L^{\beta})^N} \leq \|E\|_{L^r}, \text{ and } \int_{\Omega} E \varphi = \int_{\Omega} \tilde{E} \nabla \varphi 
\quad \forall \varphi \in H_0^1.$
Recall also that we have assumed in (\ref{r}) that $r>\frac{N}{2}$ which implies $\beta > N$. By again using the H\"{o}lder inequality we obtain 
for $\varphi \in H_0^1$: 
$$\int_{\Omega} \tilde{E} \nabla \varphi \leq \|\tilde{E}\|_{(L^\beta)^N} \|\varphi\|_{H_0^1} 
|\text{Supp } \varphi|^{\frac{\beta-2}{2\beta}}.$$
Consequently (\ref{xx1}) leads to: 
\begin{eqnarray*}
\kappa_0 \|\psi_s\|^2_{H_0^1} &\leq& \int_{\Omega} \tilde{E} \nabla \psi_s \leq   C_0 |A_{s,l}|^{\frac{\beta-2}{2\beta}} \|\psi_s\|_{H_0^1} \\ 
&\overset{\text{Young ineq.}}{\leq}& \frac{\kappa_0}{2}\|\psi_s\|^2_{H_0^1} + \frac{C_0^2}{2\kappa_0} |A_{s,l}|^{\frac{\beta-2}{\beta}}.
\end{eqnarray*}
Let $\Phi:=\frac{\beta-2}{\beta} > \frac{N-2}{N}$. We have obtained the estimate: 
$$\|\psi_s\|^2_{H_0^1} \leq \tilde{C_0} |A_{s,l}|^{\Phi}, \quad \tilde{C_0}=\frac{C_0^2}{4 \kappa_0^2}.$$ 
By  now using the Stampacchia estimates (see Subsection \ref{appendixII}) we obtain the existence of 
a real $\Lambda$ independent of $l$ such that $|A_{\Lambda,l}|=0$. Hence: 
$
\|\tilde{v_l} \|_{L^\infty(\Omega)} \leq \Lambda, \quad \Lambda=C(\|\zeta_0\|_{L^{\infty}(\partial \Omega)},|\Omega|,N,r) \frac{C_0}{\kappa_0}$
\end{itemize} 
\end{proof}
{\bf Proof of Proposition \ref{aux1}}
\begin{itemize} 
\item[i)] Existence and estimates. \noindent \\ 
Let $(v_l)_{l\geq 1}$ be the sequence given in Lemma \ref{lem0}. Let also $l\geq 1$ be given and $\tau_l:=\frac{1}{h(e^l)}$. We assume that $\|\gamma_2\|_{L^r} \leq \tau_l$. It follows from Lemma \ref{lem0}.ii) that 
$\|v_l\|_{L^\infty} \leq K$, where $K=K(\kappa_0,DATA)$ (K independent of $l$) is the integer defined by $K=[\Lambda]+1$. \\
Let now $\tau:=\tau_K$ and assume that $\|\gamma_2\|_{L^r} \leq \tau$. Then we have: $\|v_K\|_{L^\infty} \leq K$. Hence $T_K(v_K)=v_K$. On the other hand  
$v_K$ satisfies $(S_K)$, that is: 
$$\quad\left\{
	\begin{array}{l} 
        \rho{\bf u}\nabla e^{v_K} - \div((\nu e^{v_K}+\kappa) \nabla v_K) = g(x,e^{v_K}) \quad \text{in } \Omega \\ 
        v_K = \ln \zeta_0 \quad \text{on } \partial \Omega 
	\end{array}
	\right. 
$$
Let $\zeta:=e^{v_K}$. We have (see Theorem \ref{stam-lip} in the Appendix) $\zeta \in H^1(\Omega) \cap L^{\infty}(\Omega)$ and  
$\nabla \zeta = \zeta \nabla v_K$. Consequently $\zeta$ is a solution of problem (S). \\ 
Moreover $\|\zeta\|_{L^{\infty}(\Omega)} \leq e^K = C(\kappa_0,DATA)$. \\ 
On the other hand $v_K \geq -K$ almost everywhere, implies that $\zeta \geq e^{-K}$ a.e. in $\Omega$. 
Hence we obtain (\ref{Linfty-1}) by setting $\zeta_{\text{min}} = e^{-K}$ and $\zeta_{\text{max}} = e^{K}$. \\ 
$\bullet$ The estimation (\ref{Linfty-2}) is obtained as follows. \\ 
By using the test function $\psi_s^{+}$ instead of $\psi_s$ we obtain: 
\begin{equation} 
\zeta_{\text{max}} \leq \Lambda_1:=e^{C\frac{\|\gamma_1\|_{L^r}}{\kappa_0}}. \label{Linfty-2.0}
\end{equation} 
 This last estimation is only a first step in order to obtain the majoration for $\zeta_{\text{max}}$ announced in (\ref{Linfty-2}). \\   
In fact, let $\tilde{\kappa}:= \nu + \frac{\kappa}{\zeta}$. We have $0 < \nu +  \frac{\kappa_0}{\Lambda_1} =:\tilde{\kappa}_0\leq \tilde{\kappa} \leq  \nu +  \frac{\kappa_0}{\zeta_{\text{min}}} < \infty$, and 
$$
\quad\left\{
	\begin{array}{l} 
	\rho{\bf u} \cdot \nabla \zeta -\div(\tilde{\kappa} \nabla \zeta ) = g^{-}+g^{+} \quad \text{in } \Omega \\ 
     \zeta = \zeta_0 \quad \text{on } \partial \Omega 
	\end{array}
	\right. 
$$
we can then consider the decomposition $\zeta = \zeta_1 + \zeta_2,$ 
where  $\zeta_1$ (resp. $\zeta_2$) $\in H^1(\Omega) \cap L^{\infty}(\Omega)$ satisfies the following problem ($S_1$) (resp. ($S_2$)) : 
\begin{align*} 
&\text{($S_1$)}  \quad\left\{
	\begin{array}{l} 
	\rho {\bf u} \cdot \nabla \zeta_1 -\div (\tilde{\kappa} \nabla \zeta_1 ) = g^{-} \quad \text{in } \Omega \\ 
     \zeta_1 = \zeta_0 \quad \text{on } \partial \Omega 
	\end{array}
	\right.\\  
&\text{($S_2$)}  \quad\left\{
	\begin{array}{l} 
	\rho {\bf u} \cdot \nabla \zeta_2 -\div( \tilde{\kappa} \nabla \zeta_2 ) =  g^{+} \quad \text{in } \Omega \\ 
     \zeta_2 = 0 \quad \text{on } \partial \Omega 
	\end{array}
	\right. 
\end{align*}
\noindent 
Note that the second member in the PDE in ($S_1$) is negative. Hence, by the maximum principle (see \cite{stam} p.80 or \cite{brezis} p.191 for a simplified situation) we obtain: $\zeta_1 \leq \|\zeta_0\|_{L^{\infty}(\partial \Omega)}  \text{ a.e. in }\Omega.$ \\ 
By using next the Stampacchia technique (see again Subsection \ref{appendixII}, Remark \ref{remStam}) we major the function $\zeta_2$ as follows: 
$$
\zeta_2 \leq C' \frac{\|g^+\|_{L^r}}{\tilde{\kappa}_0} \underset{\text{by }(\ref{Linfty-2.0})} \leq  
e^{C\frac{\|\gamma_1\|_{L^r}}{\kappa_0}} \frac{\|g^+\|_{L^r}}{k_0}, \quad C=C(DATA).
$$
This leads to the majoration (\ref{Linfty-2}).
\item[ii)] Regularity results. \noindent \\ 
$\bullet$ If we assume that ${\bf u} \in (L^\infty(\Omega))^{N}$ then $\rho {\bf u} \in (L^\infty(\Omega))^{N}$. Moreover by using the estimates (\ref{Linfty-1}) the diffusion coefficient $\nu + \frac{\kappa}{\zeta}$ is bounded from above and below and 
$g(x,\zeta)\in L^r(\Omega)$ with $r>3/2$ fixed. Hence (see Lemma \ref{lemrep}): $g(x,\zeta)\in W^{-1,\beta}(\Omega)$ with $\beta>N$. Consequently, by using the De Giorgi-Nash Theorem (see \cite{gil} Th. 8.22) we obtain: $\zeta \in \mathcal{C}^{0,\alpha}(\overline{\Omega})$, for some $\alpha > 0$. \\ 
$\bullet$ Assume that in addition we have:
\begin{align*}
& \partial \Omega, \zeta_0 \text{ are of class } \mathcal{C}^{2,\alpha}, \quad g\in\mathcal{C}^{0,\alpha}(\overline{\Omega}\times\mathbb R^{+}), \\ 
& \rho {\bf u} \in (\mathcal{C}^{1,\alpha}(\overline{\Omega}))^N, \quad \kappa \in \mathcal{C}^{1,\alpha}(\overline{\Omega}).
\end{align*} 
We have proved in the previous point that $\zeta \in \mathcal{C}^{0,\alpha}(\overline{\Omega})$. We now iterate the Schauder estimates as follows. In a first step we see that 
$g(x,\zeta)$ and  $\nu + \frac{\kappa}{\zeta}$ are in $\mathcal{C}^{0,\alpha}(\overline{\Omega})$, and by applying 
Theorem 2.7 in \cite{chen} we obtain $\zeta \in \mathcal{C}^{1,\alpha}(\overline{\Omega})$. Consequently we now 
obtain (see Appendix B in \cite{dreyfuss}) $g(x,\zeta), \nu + \frac{\kappa}{\zeta} \in \mathcal{C}^{1,\alpha}(\overline{\Omega})$ and by using Theorem 2.8 in \cite{chen} we finally obtain $\zeta \in \mathcal{C}^{2,\alpha}(\overline{\Omega})$. 
Hence $\zeta$ is a classical solution of (S). 
\end{itemize}
\endproof
\subsection{Proof of Proposition \ref{pro1}} 
Let $n\in\mathbb N$, and $\theta_n,\phi_n$ be given. We assume that condition ($K_n$) is satified. Recall that this implies in particular: $\phi_n \leq \phi_{\text{max}}$, where $\phi_{\text{max}} > \|b\|_ {L^{\infty}(\partial \Omega)}$ was fixed in (\ref{phimax}). Hence, let $\epsilon:=\phi_{\text{max}} -\|b\|_ {L^{\infty}(\partial \Omega)} > 0.$ \\ 
- {\textit Step 1}: We introduce 
$$\kappa^{(n)}:=\frac{C_{\theta}^{(n)}}{\phi_n}, \quad g_{\theta}(x,u):=C_5-C_3F u^2 + C_4 D u.$$
Hence the first subproblem in ($Q_n$) reads as: 
$$
\text{($Q_n.1$)}\quad\left\{
	\begin{array}{l} 
     \rho {\bf u} \cdot \nabla \theta_{n+1} -\div\left( (\nu+\frac{\kappa^{(n)}}{\theta_{n+1}}) \nabla \theta_{n+1} \right) = 
     g_{\theta}(x,\theta_{n+1}) \quad \text{in }\Omega \\  
	 \theta_{n+1} = a \quad \text{on } \partial \Omega
	\end{array}
	\right. 
$$
 Moreover, it is easy to verify that:
\begin{align*}
0 < &\underbrace{\frac{\alpha_{\theta}}{\varphi_{\text{max}}}}_{=:\kappa_0} \leq \kappa^{(n)} \leq 
\| \frac{C_{\theta}^{(n)}}{\phi_n} \|_{L^\infty}< \infty  \\ 
 & g_{\theta}^{+}(x,u)=C_5 + C_4 D^+ u, \quad  g_{\theta}^{-}(x,u)=-C_3Fu^2 + C_4 D^- u.
\end{align*}
Note also that $\kappa_0$ is independent of $n$. 
Hence we can apply Proposition \ref{aux1}.i) (take $\zeta=\theta_{n+1},\kappa=\kappa^{(n)},g=g_{\theta},\gamma_1=C_5,\gamma_2=C_4D^{+},h(t)=t,
\gamma=C_3F+C_4|D^{-}|,\zeta_0=a$). We obtain the existence of $\tau_0>0$ independent of $n$ such that 
if $\|D^{+}\|_{L^r} \leq \tau_0$ then problem ($Q_n.1$) admits at least one weak solution $\theta_{n+1} \in H^1(\Omega) \cap L^{\infty}(\Omega)$. Moreover we have 
\begin{equation} 0 < e^{-C\phi_{\text{max}}} \leq \theta_{n+1} \leq  e^{C\phi_{\text{max}}} < \infty, \quad C=C(DATA). \label{est0}
\end{equation} 
- {\textit Step 2}: Let now 
$$\tilde{\kappa}^{(n)}:=\frac{C_{\phi}^{(n)}}{\theta_{n+1}}, \quad g_{\phi}^{(n)}(x,u)=g_{\phi}^{(n)}(x)=
 -\phi_n(C_8 \theta_{n+1}^{-1}+ C_6 \theta_{n+1} F- C_7 D).$$
With these notations, the second subproblem in ($Q_n$) reads as: 
$$
\text{($Q_n.2$)}\quad\left\{
	\begin{array}{l} 
     \rho {\bf u} \cdot \nabla \phi_{n+1} -\div\left( (\nu+\frac{\tilde{\kappa}^{(n)}}{\phi_{n+1}}) \nabla \phi_{n+1} \right) = 
     g_{\phi}^{(n)} \quad \text{in }\Omega \\  
	 \phi_{n+1} = b \quad \text{on } \partial \Omega
	\end{array}
	\right. 
$$
 We verify that:
\begin{align*}
0 < & \underbrace{\frac{\alpha_{\phi}}{\theta_{\text{max}}}}_{=:\tilde{\kappa}_0} \leq \tilde{\kappa}^{(n)} \leq \frac{\| C_{\phi}^{(n)} \|_{L^\infty}}{\theta_{\text{min}}} < \infty, \quad g_{\phi}^{+}=\phi_n C_7 D^+ \leq \phi_{\text{max}} C_7 D^+, \\ 
& g_{\phi}^{-}=-\phi_n(C_8 \theta_{n+1}^{-1}+ C_6 \theta_{n+1} F-C_7 D^-), \quad |g_{\phi}^{-}| \overset{by (\ref{est0})}{\leq} \tilde{C}(DATA,\phi_{\text{max}}).
\end{align*}
Hence we can apply the proposition \ref{aux1}.i) (take now $\zeta=\phi_{n+1},\kappa=\tilde{\kappa}^{(n)},g=g_{\phi}^{(n)},\gamma_1=\phi_{\text{max}}C_7D^{+},h=0,
\gamma=\tilde{C},\zeta_0=b$). Then we obtain the existence of a 
weak solution\footnote{at this stage there is anymore additional condition needed for $\|D^{+}\|_{L^r}$ because $h=0$} $\phi_{n+1} \in H^1(\Omega) \cap L^{\infty}(\Omega)$ for problem \text{($Q_n.2$)}. Moreover we have 
\begin{equation} 0 < e^{-C'e^{C \phi_{\text{max}}}} \leq \phi_{n+1} \leq  e^{C'e^{C \phi_{\text{max}}}} < \infty, \label{est0}
\end{equation}
where C,C' depend on DATA but not on $n$.
Moreover, by using (\ref{Linfty-2}) we have: 
\begin{align}
\|\phi_{n+1}\|_{L^{\infty}(\Omega)} &\leq \|b\|_{L^{\infty}(\partial \Omega)} + C''\|g_{\phi}^{+}\|_{L^r} e^{C \phi_{\text{max}}} 
\|\phi_{n+1}\|_{L^{\infty}(\Omega)}, \quad C''=C''(DATA), \notag \\ 
&\leq \|b\|_{L^{\infty}(\partial \Omega)} + \underbrace{C'' \|C_7\|_{L^\infty} e^{C \phi_{\text{max}}}
}_{:=K(\phi_{\text{max}},DATA)} \|D^{+}\|_{L^r} \|\phi_{n+1}\|_{L^{\infty}(\Omega)}. \label{est-max}
\end{align}
Assume now that 
$$ \|D^{+}\|_{L^r} \leq \tau:=\min(\tau_0,\frac{\epsilon}{\phi_{\text{max}} K}).$$
Then (\ref{est-max}) leads to 
$$\|\phi_{n+1}\|_{L^{\infty}(\Omega)} \leq \|b\|_{L^{\infty}(\partial \Omega)} + \frac{\epsilon}{\phi_{\text{max}}}  \|\phi_{n+1}\|_{L^{\infty}(\Omega)},$$
and it follows: 
$$\|\phi_{n+1}\|_{L^{\infty}(\Omega)} \leq \frac{\|b\|_{L^{\infty}(\partial \Omega)}}{1-\frac{\epsilon}{\phi_{\text{max}}}} \leq 
\phi_{\text{max}}.$$
- {\textit Final Step}: if we assume that $\|D^{+}\|_{L^r}\leq\tau$ then by using the results established in the previous two steps, we conclude that there exists a solution $(\theta_{n+1},\varphi_{n+1}) \in (H^1\cap L^{\infty})^2$ for problem ($Q_n$). Moreover 
this solution satisfies ($K_{n+1}$) and (\ref{est1_prop}),(\ref{est2_prop}) hold. \endproof
\section{Proofs of the theorems}
We begin by a lemma: 
\begin{lemm} \label{lem22}
Under the assumptions of Proposition \ref{pro1}, we can extract a subsequence 
(still denoted by $(\theta_n,\phi_n)$) such that 
\begin{align}
&\theta_n \overset{*}{\rightharpoonup}\theta, \phi_n\overset{*}{\rightharpoonup}\phi \ \text{ in } L^{\infty}(\Omega), \qquad \theta_n \rightharpoonup\theta, \phi_n \rightharpoonup \phi  \text{ in } H^1(\Omega), \quad \label{lim1} \\  
&\frac{1}{\theta_n \phi_n}\to\frac{1}{\theta \phi}, C_{\theta}^{(n)}\to C_{\theta}(x,\theta,\phi), 
C_{\phi}^{(n)}\to C_{\phi}(x,\theta,\phi) \text{ in } L^{p}(\Omega), \ p<\infty \label{lim2} 
\end{align}
\end{lemm}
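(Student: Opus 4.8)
The plan is to combine the uniform $L^\infty$ bounds furnished by Proposition \ref{pro1} with a uniform $H^1$ estimate, and then to conclude by compactness, interpolation and the Carathéodory structure of the coefficients. Note first that, applying Proposition \ref{pro1} inductively from the base case (\ref{etape1-1}), the whole sequence $(\theta_n,\phi_n)$ is well defined, satisfies $(K_n)$ and, by (\ref{est1_prop})--(\ref{est2_prop}), stays in the fixed box $[\theta_{\text{min}},\theta_{\text{max}}]\times[\phi_{\text{min}},\phi_{\text{max}}]$ with $\theta_{\text{min}},\phi_{\text{min}}>0$; hence it is bounded in $L^\infty(\Omega)$ uniformly in $n$.

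The crucial step is to upgrade this to a uniform $H^1$ bound. To this end I would test the $\theta$-equation of $(Q_n)$ with $\theta_{n+1}-\theta_0\in H^1_0(\Omega)$ and the $\phi$-equation with $\phi_{n+1}-\phi_0\in H^1_0(\Omega)$, where $\theta_0=\mathcal Ra,\ \phi_0=\mathcal Rb$ as in (\ref{etape1}). In the convection term the leading contribution $\int_\Omega\rho{\bf u}\cdot\nabla\theta_{n+1}\,\theta_{n+1}=\tfrac12\int_\Omega\rho{\bf u}\cdot\nabla(\theta_{n+1}^2)$ vanishes after integration by parts thanks to $\div(\rho{\bf u})=0$ from (\ref{h-u1}) and ${\bf u}\cdot{\bf n}=0$ from (\ref{h-bord}), while the remaining piece is controlled by $\|\rho{\bf u}\|_{(L^3)^N}\|\nabla\theta_{n+1}\|_{L^2}\|\theta_0\|_{L^6}$ and absorbed through Young's inequality. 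The diffusive bilinear form is coercive because $\nu+C_\theta^{(n)}/(\theta_{n+1}\phi_n)\ge \alpha_\theta/(\theta_{\text{max}}\phi_{\text{max}})=:\kappa_0>0$ uniformly in $n$, and its cross term with $\nabla\theta_0$ is treated by a weighted Cauchy--Schwarz inequality; the resulting weight $\int_\Omega(\nu+C_\theta^{(n)}/(\theta_{n+1}\phi_n))|\nabla\theta_0|^2$ is finite and uniform in $n$ since $C_\theta^{(n)}$ is bounded on the compact range by (\ref{h-Ci4}). Finally the sources $g_\theta^{(n)},g_\phi^{(n)}$ are bounded in $L^r(\Omega)$ uniformly in $n$ (using $\theta_{\text{min}}\le\theta_{n+1}$, $\phi_n\le\phi_{\text{max}}$ and $C_5,C_8,F,D\in L^r$), hence realize elements of $H^{-1}(\Omega)$ of uniformly bounded norm via Lemma \ref{lemrep}, pairing with the test functions through a further Young inequality. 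Collecting these yields $\|\theta_n\|_{H^1(\Omega)}+\|\phi_n\|_{H^1(\Omega)}\le C$ with $C$ independent of $n$.

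With both bounds in hand I would extract the subsequence in stages. By reflexivity of $H^1(\Omega)$ a subsequence satisfies $\theta_n\rightharpoonup\theta,\ \phi_n\rightharpoonup\phi$ in $H^1(\Omega)$; by Banach--Alaoglu the same sequences converge weakly-$*$ in $L^\infty(\Omega)$, the limits coinciding since both identify the limit in $\mathcal D'(\Omega)$, which is exactly (\ref{lim1}). The Rellich--Kondrachov theorem makes $H^1(\Omega)\hookrightarrow L^2(\Omega)$ compact, so along a further subsequence $\theta_n\to\theta,\ \phi_n\to\phi$ strongly in $L^2(\Omega)$ and, extracting once more, a.e. in $\Omega$. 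Since $|\theta_n-\theta|\le 2\theta_{\text{max}}$ a.e. gives $\|\theta_n-\theta\|_{L^p}^p\le(2\theta_{\text{max}})^{p-2}\|\theta_n-\theta\|_{L^2}^2$ for $p\ge2$ (and $L^2\subset L^p$ for $p<2$ on the bounded $\Omega$), the uniform $L^\infty$ bound promotes the strong $L^2$ convergence to strong convergence in every $L^p(\Omega)$, $p<\infty$, and likewise for $\phi_n$. The passage to the limit in the nonlinear quantities of (\ref{lim2}) is then elementary: $\theta_n\phi_n\to\theta\phi$ a.e. with $\theta_n\phi_n\ge\theta_{\text{min}}\phi_{\text{min}}$, so $1/(\theta_n\phi_n)\to1/(\theta\phi)$ a.e. while staying bounded by $1/(\theta_{\text{min}}\phi_{\text{min}})$, and dominated convergence on $\Omega$ gives convergence in each $L^p$; for the coefficients the Carathéodory property (\ref{h-Ci2}) yields $C_\theta^{(n)}=C_\theta(x,\theta_n,\phi_n)\to C_\theta(x,\theta,\phi)$ a.e. (and similarly for $C_\phi^{(n)}$), and the uniform $L^\infty$ bound from (\ref{h-Ci4}) over the compact box allows one last dominated-convergence argument.

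The only genuinely delicate point is the uniform $H^1$ estimate of the second paragraph: the diffusion coefficient is simultaneously singular (as $\theta\to0$) and possibly degenerate (since $\nu\equiv0$ is allowed), so coercivity must be drawn solely from the uniform positive lower bound $\kappa_0$ issued by (\ref{est1_prop})--(\ref{est2_prop}), and the convection term must be annihilated precisely by the structural identities $\div(\rho{\bf u})=0$ and ${\bf u}\cdot{\bf n}=0$. Once that estimate is secured, the compactness, the interpolation against the $L^\infty$ bound, and the Carathéodory/dominated-convergence limits are routine.
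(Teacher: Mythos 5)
Your proposal is correct and follows essentially the same route as the paper: the uniform $L^\infty$ bounds come from Proposition \ref{pro1}, the uniform $H^1$ bound is obtained by testing $(Q_n.1)$ and $(Q_n.2)$ with $\theta_{n+1}-\theta_0$ and $\phi_{n+1}-\phi_0$, and (\ref{lim2}) follows from a.e.\ convergence plus dominated convergence using the Carath\'eodory property and the uniform bounds on the compact box. You merely spell out steps the paper leaves implicit (the cancellation of the convection term, the Rellich--Kondrachov extraction of an a.e.\ convergent subsequence, and the $L^\infty$--$L^2$ interpolation giving convergence in every $L^p$, $p<\infty$).
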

\begin{proof}
The first properties in (\ref{lim1}) follow directly from Proposition \ref{pro1}. By next using  
$\theta_n-\theta_0$ as test function in ($Q_n$.1) and $\phi_n-\phi_0$ as test function in ($Q_n$.2) we obtain a 
uniform bound for $(\theta_n)$ and  $(\phi_n)$ in the $H^1$-norm. Hence the second properties in (\ref{lim1}) follow. Finally 
property (\ref{lim2}) is obtained by using the dominated convergence theorem. In fact we have: 
\begin{align*} 
&\frac{1}{\theta_n \phi_n} \to \frac{1}{\theta \phi},\  C_{\theta}^{(n)} \to  C_{\theta}(x,\theta,\phi), \ C_{\phi}^{(n)}\to 
 C_{\phi}(x,\theta,\phi) \quad \text{a.e in }\Omega, \\ 
&|\frac{1}{\theta_n \phi_n}| \leq \frac{1}{\theta_{\text{min}} \phi_{\text{min}}}, \  
|C_{\theta}^{(n)}|\leq\sup_{(v,w)\in\mathcal{K}}C_{\theta}(x,v,w),\  |C_{\phi}^{(n)}|\leq\sup_{(v,w)\in\mathcal{K}} C_{\phi}(x,v,w),
\end{align*}
where $\mathcal{K}=[0,\theta_{\text{max}}]\times[0,\phi_{\text{max}}]$.
\end{proof}
\subsection{Proof of Theorem \ref{theo1}} 
By using (\ref{lim1}) together with (\ref{lim2}) we obtain: 
\begin{eqnarray*}
\rho {\bf u}.\nabla \theta_{n+1} &\rightharpoonup& \rho {\bf u}.\nabla \theta \ \text{ in } L^1(\Omega) \\
(\nu+\frac{C_{\theta}^{(n)}}{\theta_{n+1} \varphi_n}) \nabla \theta_{n+1} &\rightharpoonup& (\nu+\frac{C_{\theta}}{\theta \varphi}) \nabla \theta \ \text{ in } L^q(\Omega), \ q<2 \\
\rho {\bf u}.\nabla \phi_{n+1} &\rightharpoonup& \rho {\bf u}.\nabla \phi \ \text{ in } L^1(\Omega) \\ 
(\nu+\frac{C_{\phi}^{(n)}}{\theta_{n+1} \varphi_{n+1}}) \nabla \phi_{n+1} &\rightharpoonup& (\nu+\frac{C_{\phi}}{\theta \varphi}) \nabla \phi \ \text{ in } L^q(\Omega),\ q<2.
\end{eqnarray*}
Moreover by using (\ref{lim1}) together with the property $\theta_{n+1}\geq \theta_{\text{min}}>0$ we obtain: 
$$\theta_{n+1}^2 \to \theta^2, \ \theta_{n+1} \to \theta, \ \theta_{n+1}^{-1} \to \theta^{-1} \quad \text{in } L^p(\Omega), \ p<\infty.$$    
Hence we can pass to the limit in the approximate problems ($Q_n$). 
We obtain a weak solution $(\theta,\phi)$ for problem (Q). That is for all $\psi\in\mathcal{D}(\Omega)$:   
\begin{align*} 
	\int_{\Omega} \rho {\bf u} \cdot \nabla \theta \ \psi + \int_{\Omega} (\nu+\frac{C_{\theta}}{\theta \phi}) \nabla \theta \cdot \nabla \psi &= 
	\int_{\Omega} (C_5 - C_3  F \theta^2+ C_4 \theta D) \psi \\ 
  \int_{\Omega} \rho {\bf u} \cdot \nabla \phi \ \psi + \int_{\Omega} (\nu+\frac{C_{\phi}}{\theta\phi}) \nabla \phi \cdot \nabla \psi &= 
     \int_{\Omega} -\phi (C_8 \theta^{-1}+ C_6 \theta F - C_7 D)\psi \\  
	 \theta = a, \phi = b \quad \text{on } \partial \Omega 
\end{align*}
Moreover this solution satisfies:    
$$
\theta, \phi \in H^1(\Omega) \cap L^{\infty}(\Omega),\quad \theta, \phi \geq \min(\theta_{\text{min}},\phi_{\text{min}}) > 0 \ \text{ a.e. in }  \Omega.
$$
\subsection{Proof of Theorem \ref{theo2}} \label{theo2D}

When $N=2$ the function $F$ has a stronger property of positivity (see lemma \ref{lemF} in the Appendix):  
\begin{equation} 
F\geq \frac{D^2}{3}. \label{MF}
\end{equation}
We will see that this last property allows one to obtain a weak solution for problem (Q) under the assumptions ($A_0$) and ($A_1$) but without assuming a low compressibility condition of the form (\ref{lowcomp}).\\
In order to prove this result, we take over the proof of Proposition \ref{pro1} with slight modifications: 
if $(\theta_n,\phi_n)$ is given and satisfies $(H_n)$ (it is not useful to consider ($K_n$) here) then problem ($Q_n$) has at least one solution 
$(\theta_{n+1},\phi_{n+1})$ satisfying in addition $(H_{n+1})$ and the estimates (\ref{est1_prop}),(\ref{est2_prop}). \\ 
- {\textit Step 1}: By using property (\ref{MF}) we majore $g_{\theta}^{+}$ as follows: 
\begin{align*} 
g_{\theta}^{+}(x,u) &= C_5 + C_4 D^{+}u - C_3 F u^2 \leq  C_5 + C_4 D^{+}u - \frac{C_3}{3} (D^{+} u)^2  \\ 
&\leq C_5 +\underbrace{ D^{+} u}_{\geq 0}\underbrace{(C_4- \frac{C_3}{3} (D^{+} u)}_{\leq 0 \text{ if } D^{+} u \geq \frac{3C_4}{C_3}} 
\leq C_5+\frac{3C_4^2}{C_3}.
\end{align*}
Hence we have here estimated $g_{\theta}^{+}$ independtly of the second variable. We then apply Proposition \ref{aux1}.i), but now we take 
$\gamma_1=C_5+\frac{3C_4^2}{C_3}$ instead of $C_5$ and $h(t)=0$ instead of $h(t)=t$. It follow that there exists 
(without any condition on  $\|D^{+}\|_{L^r}$ because $h=0$) a weak solution $\theta_{n+1} \in H^1(\Omega) \cap L^{\infty}(\Omega)$  for problem ($Q_n.1$), 
with the estimate: 
\begin{equation} 0 < e^{-C\|\phi_n\|_{L^\infty}} \leq \theta_{n+1} \leq  e^{C\|\phi_n\|_{L^\infty}} < \infty, \quad C=C(DATA).\label{est0-2D}
\end{equation}
- {\textit Step 2}: By taking over the arguments presented in the proof of Proposition \ref{pro1} we see that 
problem ($Q_n.2$) has at least one positive solution $\phi_{n+1} \in H^1(\Omega) \cap L^{\infty}(\Omega)$. \\  
Hence at this point we have obtained a weak solution $(\theta_{n+1},\phi_{n+1})$ for ($Q_n$) satisfying in addition $(H_{n+1})$. It remains 
to prove that the estimates  (\ref{est1_prop}),(\ref{est2_prop}) hold. We have made a first step in this direction by proving (\ref{est0-2D}). 
We will now prove: 
\begin{equation} 0 < e^{-C\|\theta_{n+1}\|_{L^\infty}} \leq \phi_{n+1} \leq  \|b\|_{L^{\infty}(\partial \Omega)}, \quad C=C(DATA).\label{est1-2D}
\end{equation}
In fact, by using the additional assumption ($A_1$) we majore the function $g_{\phi}^{(n)}$ as follows: 
\begin{align*} g_{\phi}^{(n)}(x)&=
 -\phi_n(C_8 \theta_{n+1}^{-1}+ C_6 \theta_{n+1} F- C_7 D) \overset{\text{by }(\ref{MF})}{\leq} 
-\phi_n(C_8 \theta_{n+1}^{-1}+ \frac{C_6}{3} \theta_{n+1} D^2- C_7 D) \\ 
&\leq -\frac{\rho\phi_n}{\theta_{n+1}}(c_8+\frac{c_6}{3}(\theta_{n+1} D)^2-c_7 (\theta_{n+1} D)) = 
-\frac{\rho\phi_n}{\theta_{n+1}}\mathcal P(\theta_{n+1} D),
\end{align*}
with $\mathcal P(X):=\frac{c_6}{3}X^2-c_7 X+c_8$. We remark that the discriminant $\Delta$ of $\mathcal P$ is negative: 
$\Delta=c_7^2-\frac{4}{3}c_6c_8=-4.864*10^{-3}<0$. It follows that $\mathcal P$ is positive and consequently $g_{\phi}^{(n)}$ is negative. 
Hence by applying Proposition \ref{aux1}.i) with now $g^+\equiv 0$ we obtain (\ref{est1-2D}). \\
- {\textit Final Step}:  By using (\ref{est0-2D}) together with (\ref{est1-2D}) we obtain the estimates (\ref{est1_prop}) and (\ref{est2_prop}). Hence we have 
recovered the conclusions of Proposition \ref{pro1}. The remaindee of the proof for Theorem \ref{theo3} 
is exactly the same as for Theorem \ref{theo1}: we can extract a subsequence with the properties (\ref{lim1})-(\ref{lim1}). These properties are 
sufficient to pass to the limit $n\to\infty$ in ($Q_n$) and Theorem \ref{theo2} follows.
\subsection{Proof of Theorem \ref{theo3}} 
Let ($\theta,\phi$) be a weak solution  for (Q) in the class $\mathcal S$ and consider the notations: 
$$g_{\theta}(x,u):=C_5-C_3F u^2 + C_4 D u, \quad  g_{\phi}(x,u):=-u(C_8 \theta^{-1}+ C_6 \theta F- C_7 D).$$
\begin{itemize}
\item[i)] It suffices to remark that the coefficients $\frac{C_{\theta}}{\phi}$ and $\frac{C_{\phi}}{\theta}$ are bounded from above and below, and $g_{\theta}, g_{\phi}$ are Caratheodory functions satisfying (\ref{G2}) and (\ref{G3}). Hence we can apply the first point in Proposition \ref{aux1}-ii) in each equation of (Q). We obtain: $\theta, \phi \in \mathcal{C}^{0,\alpha}(\overline{\Omega})$, for some $\alpha > 0$. 
\item[ii)]Assume that in addition we have: 
\begin{align*}
& \partial \Omega,a,b \text{ are of class } \mathcal{C}^{2,\alpha}, \quad F \in \mathcal{C}^{0,\alpha}(\overline{\Omega}), \\ 
& \rho {\bf u} \in (\mathcal{C}^{1,\alpha}(\overline{\Omega}))^N, \quad C_{\theta}, C_{\phi} \in \mathcal{C}^{1,\alpha}(\overline{\Omega}\times(\mathbb R^{+})^2).
\end{align*} 
We remark now that the conditions in the second part of Proposition \ref{aux1}-ii) are satisfied for each equation of (Q). Hence $\theta, \phi \in \mathcal{C}^{2,\alpha}(\overline{\Omega})$ and it is a classical solution of (Q). 
\end{itemize}
\section{Appendix}
\subsection{Derivation of the $\theta-\phi$ model.}
The model is constructed from the $k-\epsilon$ one which takes the form:    
\begin{align}
&\partial_t k + {\bf u} \cdot \nabla k - \frac{c_{\nu}}{\rho} \text{div }((\nu +\rho\frac{k^2}{\varepsilon})\nabla k)=
c_\nu \frac{k^2}{\epsilon}F-\frac{2}{3}kD-\epsilon, \label{a-k1}\\ 
&\partial_t \epsilon + {\bf u} \cdot \nabla \epsilon - \frac{c_{\epsilon}}{\rho} \text{div }((\nu+\rho\frac{k^2}{\varepsilon})\nabla \varepsilon)=
c_1kF-\frac{2c_1}{3c_\nu}\epsilon D-c_2\frac{\epsilon^2}{k}, \label{a-eps1}
\end{align}
where $D(x,t):=\div {\bf u}(x,t), \ F(x,t):=\frac{1}{2}|\nabla {\bf u}+ (\nabla {\bf u})^T|^2-\frac{2}{3}D(x,t)^2\geq 0$ (see the next subsection )  and $c_\nu, c_\epsilon, c_1, c_2$ are generally taken as positive constants. Their usual values 
are (see \cite{moha} p.122): 
\begin{equation} c_{\nu}=0.09,  \ c_{\epsilon}=0.07, \ c_1=0.128, \ c_2=1.92. \label{val1} 
\end{equation}
We then consider the new variables
\begin{equation}\theta=\frac{k}{\epsilon}, \quad \phi=k^{\alpha}\epsilon^{\beta},\label{TF}
\end{equation}
with $\alpha$ and $\beta$ to be chosen appriopriately. 
Let $D_t$ denote the total derivative operator. By using (\ref {a-k1}) together with 
(\ref{a-eps1}) we obtain an equation for $\theta$: 
\begin{align} 
D_t \theta &= \frac{\partial \theta}{\partial t} + {\bf u}\cdot \nabla \theta = \frac{1}{\epsilon} D_t  k - \frac{k}{\epsilon^2}  
D_t \epsilon = -c_3 \theta^2  F + c_4 \theta D + c_5 + \text{Diff}_{\theta}, \notag \\ 
& c_3=c_1-c_{\nu}, \ c_4=\frac{2}{3}(\frac{c_1}{c_{\nu}}-1), \ c_5=c_2-1, \label{val2}
\end{align}
where $\text{Diff}_{\theta}$ denotes the collected terms coming from the viscous one in the $k$ and $\epsilon$ equations.
The equation for $\phi$ is obtained in the same way:
\begin{align} 
D_t &\phi = \alpha k^{\alpha-1}\epsilon^{\beta} D_t  k + \beta k^{\alpha}\epsilon^{\beta-1} D_t \epsilon =   
\alpha k^{\alpha-1}\epsilon^{\beta}(c_{\nu} \frac{k^2}{\epsilon}F-\frac{2}{3}kD-\epsilon) +  \text{Diff1}_{\phi} \notag \\
&+ \beta k^{\alpha}\epsilon^{\beta-1}(c_1 kF-\frac{2c_1}{3c_{\nu}}\epsilon D-c_2 \frac{\epsilon^2}{k}) +  \text{Diff2}_{\phi} = F k^{\alpha+1}\epsilon^{\beta-1}(\alpha c_{\nu}+\beta c_1) \notag \\ 
&- k^{\alpha}\epsilon^{\beta} D\frac{2}{3}(\alpha+\beta\frac{c_1 }{c_{\nu}})-k^{\alpha-1}\epsilon^{\beta+1}(\alpha+\beta c_2 )+\text{Diff}_{\phi}, \notag \\ 
&=-\phi(c_6 \theta F-c_7 D + c_8 \theta^{-1}) + \text{Diff}_{\phi}, \notag \\ 
&c_6=-\alpha c_{\nu}-\beta c_1, \ c_7=-\frac{2}{3}(\alpha+\beta\frac{c_1}{c_{\nu}}), \ c_8=\alpha+\beta c_2, \label{val3}
\end{align}
where $\text{Diff}_{\phi}=\text{Diff1}_{\phi}+\text{Diff2}_{\phi}$ is the sum of the terms coming from the viscous one in the $k$ and $\epsilon$ equations. 

The usual constant values for the parameters $c_3,c_4,c_5$ appearing in the model are obtained by replacing 
the values (\ref{val1}) in the expressions (\ref{val2}). This leads to: 
$$ c_3=0.038, c_4=0.2815, c_5=0.92.$$

At this stage it remains to choose appriopriately $\alpha$ and $\beta$ in (\ref{TF}), and to model the terms  
$\text{Diff}_{\theta}$ and $\text{Diff}_{\phi}$. \\ 
It is schown in \cite{moha}, page 67, that a good choice in the incompressible situation (i.e. when $D=0$) is for instance 
$\alpha=-3, \beta=2$. This leads to the following constant values:
\begin{equation} 
c_6=0.014, \ c_7=0.104, \ c_8=0.84, \label{val4}
\end{equation}
and this makes the dynamic stable for the equation in $\phi$. I.e. in the absence of the viscous part $\text{Diff}_{\phi}$, we have: 
$D_t \phi \leq 0$. \\ 
In the compressible situation the authors suggest in \cite{moha}, page 125 to consider another choice: 
$\alpha=-2\frac{c_1}{c_{\nu}}\approx -2.88$ and $\beta=2$ which makes again the dynamic stable. \\ 
Nevertheless in this last situation, the variable $\phi$ does not have a clear physical meaning (whereas when $\alpha=-3, \beta=2$ 
we have $\phi=\frac{\epsilon^2}{k^3}$ and $L:=\phi^{-1/2}$ represents a length scale of turbulence (see \cite{wilcox})). 
Moreover, a carefully estimation schows (see Lemma \ref{lemF} in the next subsection) that when $N=2$ we have 
$F\geq \frac{1}{3}D^2$. This leads to: 
$$ 
-\phi(c_6 \theta F-c_7 D + c_8 \theta^{-1}) \leq -\frac{\phi}{\theta}(\frac{c_6}{3} (\theta D)^2-c_7 (\theta D) + c_8) 
= -\frac{\phi}{\theta}\mathcal{P}(\theta D),$$ 
with $\mathcal{P}(X):=\frac{c_6}{3} X^2-c_7 X+ c_8$. Hence the choice $\alpha=-3, \beta=2$ makes again the dynamic 
stable when $N=2$. In fact, in this case $c_6,c_7,c_8$ take the values (\ref{val4}) and the discriminant $\Delta$ of $\mathcal{P}$ 
is $\Delta=c_7^2-\frac{4}{3}c_6c_8=-4.864*10^{-3} < 0$. Consequently $\mathcal{P}(X) \geq 0$ and 
$D_t \phi \leq 0$ in the absence of the viscous terms. \\ 

In consequence we point out that the choice $\alpha=-3, \beta=2$ is also interesting in the compressible situation. We shall make this choice in all 
the situations. Our analysis (see Theorem \ref{theo1}) shows that this leads to a well posed model even when $N=3$ under an additional assumption of low compressibility of the flow. \\ 

The terms $\text{Diff}_{\theta}$ and $\text{Diff}_{\phi}$ are modelled (see \cite{Lew}) by: 
\begin{equation}
\text{Diff}_{\theta}=\frac{1}{\rho}\div ((\nu + c_{\theta} \nu_t)\nabla \theta), \quad 
\text{Diff}_{\phi}=\frac{1}{\rho}\div ((\nu + c_{\phi} \nu_t)\nabla \theta), \label{diff}
\end{equation}
where $\nu_t:=\rho c_{\nu}\frac{k^2}{\epsilon}=\rho \frac{c_{\nu}}{\theta \phi}$ is the turbulent viscosity coming from 
the equation of $k$, and  $c_{\theta},c_{\phi}$ are two new parameters for the model. \\ 
The determination of the parameters $c_{\theta}$ and $c_{\phi}$ can be realized in the same way as for the determination 
of the coefficients arising in the $k-\epsilon$ model (see \cite{moha}). In \cite{moha-num} a constant value for both $c_{\theta}$ and $c_{\phi}$ was numerically tuned from a simulation of a Poiseuil flow. However better results are obtained if we allow $c_{\theta},c_{\phi}$ to be some positive functions (see \cite{moha2}). In our analysis we allow the coefficients to be of a 
very general form, in particular $c_{\theta},c_{\phi}$ may depend on $x,\theta$ and $\phi$. We only assume that they 
are Caratheodory functions and that they satisfy some positivity and boundedness properties (see (\ref{h-Ci2})-(\ref{h-Ci4}), 
where $C_{\theta}=\rho c_{\theta}$ and $C_{\phi}=\rho c_{\phi}$). 
\subsection{Positivity of the function $F$} \label{appendixIII}

In this paragraph we will establish some properties of positivity for the function $F$ appearing in the models.  \\ 
Let $\mathcal M_N(\mathbb R)$ denote the vector space of the N-square matrix with real coefficients, equipped with the scalar product: 
$$A:B = \sum_i \sum_j a_{ij}b_{ij}, \quad \forall A=(a_{ij}),B=(b_{ij}) \in \mathcal M_N(\mathbb R).$$
Hence $|A|^2:=\sqrt{A:A}$ defines a norm on $\mathcal M_N(\mathbb R)$.

For a vector field ${\bf u}: \Omega \to \mathbb R^N$ we classically defines the gradient $\nabla {\bf u}$  and the divergence $\div {\bf u}$ (=:D) by:
\begin{align*} 
\nabla {\bf u}&: \   \Omega \to \mathcal M_N(\mathbb R), \quad (\nabla {\bf u}(x))_{ij} = \frac{\partial u_i(x)}{\partial x_j}, \\
D&: \   \Omega \to \mathbb R, \quad D(x)=\sum_i \frac{\partial u_i(x)}{\partial x_i} = \text{Tr}(\nabla {\bf u}).
\end{align*}
Recall that the function $F$ was defined by the formula: 
\begin{equation}
F(x):=\frac{1}{2}|\nabla {\bf u}+ (\nabla {\bf u})^T|^2-\frac{2}{3}D(x)^2, \label{defF}
\end{equation}
and by an easy calculation we obtain: 
\begin{equation}
F(x)=(\nabla {\bf u} + (\nabla {\bf u})^t):\nabla {\bf u} -\frac{2}{3}D(x)^2. \label{defF2}
\end{equation}
This last expression is sometimes chosen (for instance in \cite{moha}) to equivalently define $F$. \\ 
The important fact is that we always have $F \geq 0$ but moreover, when $N=2$ the stronger estimate:  $F \geq \frac{1}{3} D^2$ holds. These properties are 
established in the following lemma:

\begin{lemm} \label{lemF}
The function  $F$ satisfies the estimates: 
\begin{align*}
F &= \frac{2}{3}\left( (\partial_{1} u_1-\partial_{2} u_2)^2+(\partial_{1} u_1-\partial_{3} u_3)^2+ (\partial_{2}u_2-\partial_{3}u_3)^2 \right) \\ 
&\ +(\partial_{2}u_1+\partial_{1} u_2)^2 + (\partial_{3}u_1+\partial_{1} u_3)^2 
+(\partial_{3}u_2+\partial_{2}u_3)^2 \geq 0, \quad \text{when } N=3. \\ 
F &= (\partial_{1} u_1-\partial_{2} u_2)^2+(\partial_{2}u_1+\partial_{1} u_2)^2+\frac{1}{3}D^2 \geq \frac{1}{3} D^2, \quad \text{when } N=2.
\end{align*} 
 \end{lemm}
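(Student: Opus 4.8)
The plan is to prove both identities by a direct entrywise expansion and then read off the sign from the resulting sum of squares. First I would write the squared Frobenius norm componentwise: since $(\nabla {\bf u}+(\nabla {\bf u})^T)_{ij}=\partial_j u_i+\partial_i u_j$, separating the diagonal ($i=j$) from the off-diagonal contributions gives
$$|\nabla {\bf u}+(\nabla {\bf u})^T|^2=\sum_{i,j}(\partial_j u_i+\partial_i u_j)^2 = 4\sum_i (\partial_i u_i)^2 + 2\sum_{i<j}(\partial_j u_i+\partial_i u_j)^2,$$
where each diagonal term contributes $(2\partial_i u_i)^2$ and each unordered pair $\{i,j\}$ is counted twice. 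Dividing by two,
$$\tfrac12|\nabla {\bf u}+(\nabla {\bf u})^T|^2 = 2\sum_i (\partial_i u_i)^2 + \sum_{i<j}(\partial_j u_i+\partial_i u_j)^2,$$
so the off-diagonal squares already appear exactly as in the claimed formulas, and only the diagonal quadratic form must be reconciled with the term $-\tfrac23 D^2$.

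Writing $a_i:=\partial_i u_i$, so that $D=\sum_i a_i$, the remaining diagonal part of $F$ is $2\sum_i a_i^2 - \tfrac23\big(\sum_i a_i\big)^2$, and the key step is to recognize it as a sum of squares of differences. In dimension three I would use $\big(\sum_i a_i\big)^2=\sum_i a_i^2+2\sum_{i<j}a_ia_j$ to get $2\sum_i a_i^2 - \tfrac23\big(\sum_i a_i\big)^2 = \tfrac23\big[2\sum_i a_i^2 - 2\sum_{i<j}a_ia_j\big] = \tfrac23\sum_{i<j}(a_i-a_j)^2$, which is precisely the stated $N=3$ expression and makes $F\ge 0$ manifest since every surviving term is a square.

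In dimension two the same expansion leaves $2(a_1^2+a_2^2)-\tfrac23(a_1+a_2)^2 = \tfrac43(a_1^2-a_1a_2+a_2^2)$. Here I would not fully symmetrize the differences but instead split off one copy of $D^2$: a short check gives $\tfrac43(a_1^2-a_1a_2+a_2^2)=(a_1-a_2)^2+\tfrac13(a_1+a_2)^2=(\partial_1 u_1-\partial_2 u_2)^2+\tfrac13 D^2$. Adding the single off-diagonal square $(\partial_2 u_1+\partial_1 u_2)^2$ yields the claimed identity, and discarding the two nonnegative squares gives the sharper bound $F\ge\tfrac13 D^2$. There is no genuine obstacle beyond careful bookkeeping of the factor-of-two multiplicities in the Frobenius norm and spotting the two slightly different regroupings of the diagonal form: full symmetrization into $\sum_{i<j}(a_i-a_j)^2$ when $N=3$, versus the asymmetric split $(a_1-a_2)^2+\tfrac13 D^2$ when $N=2$, the latter being exactly what produces the stronger lower bound $\tfrac13 D^2$ available only in the plane.
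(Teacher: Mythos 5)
Your proposal is correct and follows essentially the same route as the paper: expand the Frobenius norm into the diagonal part $4\sum_i(\partial_i u_i)^2$ plus twice the sum of off-diagonal squares, then rewrite the residual diagonal form $2\sum_i a_i^2-\tfrac23(\sum_i a_i)^2$ as $\tfrac23\sum_{i<j}(a_i-a_j)^2$ when $N=3$ and as $(a_1-a_2)^2+\tfrac13(a_1+a_2)^2$ when $N=2$. The algebraic identities you use are exactly those in the paper's proof, so no further comparison is needed.
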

\begin{proof}
Let $N=2$ or 3, and $M:=\nabla {\bf u} + (\nabla {\bf u})^t$. Then $M_{ij} = \partial_{j} u_i +  \partial_{i} u_j$ and we obtain:  
\begin{align*}
|M|^2 &= \sum_i \big( (2  \partial_{i} u_i)^2 + \sum_{j\neq i} (\partial_{j} u_i +  \partial_{i} u_j)^2\big) 
= 4 \sum_i (\partial_{i} u_i)^2 +2\sum_i \sum_{j>i} (\partial_{j} u_i +  \partial_{i} u_j)^2, \\ 
F&=\frac{1}{2}|M|^2-\frac{2}{3}D^2= 
\underbrace{2\sum_i (\partial_{i} u_i)^2 - \frac{2}{3}(\sum_i \partial_{i} u_i)^2}_{:=A} + \sum_i \sum_{j>i} (\partial_{j} u_i +  \partial_{i} u_j)^2 
\end{align*}
The term $A$ is evaluated separately in the cases $N=2$ and $N=3$.\\ 
We remark that $2(a^2+b^2+c^2)-\frac{2}{3}(a+b+c)^2=\frac{2}{3}((a-b)^2+(a-c)^2+(b-c)^2)$. Hence, when $N=3$ we have 
$$A=\frac{2}{3}\left( (\partial_{1} u_1-\partial_{2} u_2)^2+(\partial_{1} u_1-\partial_{3} u_3)^2+ (\partial_{2}u_2-\partial_{3}u_3)^2 \right),$$
and we obtain the expression announced for $F$. \\ 
In the same way, we remark that 
\begin{align*}
2(&a^2+b^2)-\frac{2}{3}(a+b)^2 = \frac{2}{3}(a-b)^2 + \frac{2}{3}(a^2+b^2) \\ 
&=\frac{2}{3}(a-b)^2 + \frac{1}{3}(a+b)^2 +  \frac{1}{3}(a-b)^2 = (a-b)^2 + \frac{1}{3}(a+b)^2.
\end{align*}
Hence, when $N=2$ we obtain: 
$$
A=(\partial_{1} u_1-\partial_{2} u_2)^2 + \frac{1}{3}\underbrace{(\partial_{1} u_1+\partial_{2} u_2)^2}_{=D^2} , \\ 
$$
and the expression for $F$ follows.
\end{proof}
\subsection{On the low compressibility assumption}

 We will show here that without any assumption of low compressibility of the 
form (\ref{lowcomp}), problem (Q) may be very hard to analyze when N=3, and singular solutions or non existence of weak solution may occur. \\ 

When the dimension equals two we have seen in Theorem \ref{theo2} that a condition of low compressibility is not necessary. The reason is related to the fact that a stronger 
property of positivity for $F$ holds in this case, i.e. we have: $F \geq \frac{D^2}{3}$. When the dimension equals 3 such a property does not hold in general. \\ 

In fact, let for instance $\Omega=B_{\mathbb R^3}(0,1)$ and 
$$\rho=(\prod_{i=1}^3 (x_i+4))^{-1}, \quad {\mathbf u}=(u_1,u_2,u_3)^t , \ u_i=x_i+4.$$
Then a simple calculation gives:  $D=3$ and $F=0$. Hence (Q) reads as: 
 $$  \quad\left\{
	\begin{array}{l} 
	\rho{\bf u} \cdot \nabla \theta -\div\left( (\nu+\frac{C_{\theta}}{\theta \phi}) \nabla \theta\right) = 3 C_4 \theta+ C_5 \quad \text{in } \Omega \\ 
     \rho {\bf u} \cdot \nabla\phi -\div\left( (\nu+\frac{C_{\phi}}{\theta\phi}) \nabla \phi\right) = 
      \varphi (3 C_7 - C_8 \theta^{-1}) \quad \text{in }\Omega \\ 
	 \theta = a, \phi = b \quad \text{on } \partial \Omega 
	\end{array}
	\right. 
$$
In this situation the problem becomes hard to analyze. Assume however that we have obtained a solution $(\theta,\phi)$ in the class $\mathcal S$. Then the equation 
satisfied by $\theta$ is closely related to: 
$$(R) \quad\left\{
	\begin{array}{l} 
	-\div\left( \eta \nabla \theta\right) = 3 C_4 e^\theta+ C_5 \quad \text{in } \Omega, \\
              \theta = a \quad \text{on } \partial \Omega, 
	\end{array}
	\right. 
$$
with $\eta$ bounded from above and below. Hence a contradiction can occur because the problem (R) may not have any weak solution (see for instance \cite{brezis2,lupo}). 
Note that in the considered  example $\rho$ and $\mathbf{u}$ satsify all the conditions needed in ($A_0$), except 
$\mathbf{u}\cdot \mathbf{n}=0$ on $\partial \Omega$ but this is not restrictive for the purpose here. In fact we can consider the domain $\Omega_1=B_{\mathbb R^3}(0,2)$ which contains $\Omega$ and we can extend $\rho,\mathbf{u}$ in 
$\Omega_1$ in such a way that all the conditions in ($A_0$) are satisfied. Hence we obtain an example within the main situation 
of the study but the evocated problems remain the same.  
\subsection{A generalized chain rule} \label{appendix0}

Let $u \in W^{1,p}(\Omega)$ and $G: \mathbb R \to \mathbb R$ be a Lipschitz function. 
We recall here some useful properties of the composed function $G(u)$. In particular we schall see 
that $G(u) \in W^{1,p}(\Omega)$. Moreover in some situations we also have $\frac{\partial}{\partial x_i} G(u) = G'(u)\frac{\partial u}{\partial x_i}$. \\ 

The main result we have in mind is Theorem \ref{stam-lip} which is due to Stampacchia. In particular we point out that the additional assumption $G(0)=0$ for the Lipschitz function $G$ is only necessary if $\Omega$ is unbounded and $p\neq \infty$, or if we want that $G(u)$ has a vanishing trace on $\partial \Omega$ when $u$ has it (this last situation was in fact the case of interest of Stampacchia). 
\begin{theo} \label{stam-lip}
Let $G$ be a Lipschitz real function, $\Omega \subset \mathbb R^N$ be a bounded open Lipschitz domain, and $u\in W^{1,p}(\Omega)$, with  $p\in[1;\infty]$. We have:
\begin{itemize}
\item[i)] 
$G(u)\in W^{1,p}(\Omega)$. Moreover if $u\in W^{1,p}_0(\Omega)$ and $G(0)=0$ then $G(u)\in W^{1,p}_0(\Omega)$.
\item[ii)]
If $G'$ has a finite number of  discontinuity\footnote{the derivative $G'$ of $G$ takes here the classical sense}. Then the 
weak derivatives of $G(u)$ are given by the formula:
\begin{equation}
\frac{\partial}{\partial x_i} G(u)=G'(u) \frac{\partial u}{\partial x_i} \quad \text{a.e. in } \Omega.  \label{der-stam}
\end{equation}
\end{itemize}
\end{theo}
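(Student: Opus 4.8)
The plan is to reduce everything to the classical chain rule for smooth data and then remove the smoothness by two successive approximations, of the argument $u$ and of the nonlinearity $G$, with every limit controlled by the Lipschitz constant $L$ of $G$ through the uniform bound $|G'| \le L$. Two preliminary reductions are convenient. First, since $\Omega$ is bounded, $W^{1,\infty}(\Omega) \subset W^{1,q}(\Omega)$ for every finite $q$; hence once (\ref{der-stam}) is known for all finite exponents, then for $p = \infty$ its right-hand side lies in $L^\infty$ (as $|G'| \le L$ and $\nabla u \in L^\infty$) while $G(u)$ is bounded, giving $G(u) \in W^{1,\infty}(\Omega)$. So I may assume $1 \le p < \infty$ throughout. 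Second, the only non-elementary analytic input is the classical level-set lemma, which I would state and use: for $u \in W^{1,p}(\Omega)$ and any constant $c$, $\nabla u = 0$ almost everywhere on $\{x \in \Omega : u(x) = c\}$. This is exactly what renders the formula in ii) insensitive to the values of $G'$ at its finitely many discontinuities.

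For the regular case $G \in C^1(\mathbb{R})$ with $G'$ bounded, I choose $u_n \in C^\infty(\overline{\Omega})$ with $u_n \to u$ in $W^{1,p}(\Omega)$ (possible because $\Omega$ is a bounded Lipschitz, hence extension, domain) and, along a subsequence, $u_n \to u$ a.e. The classical chain rule gives $\nabla G(u_n) = G'(u_n)\nabla u_n$. The Lipschitz bound yields $G(u_n) \to G(u)$ in $L^p$; continuity and boundedness of $G'$ give $G'(u_n) \to G'(u)$ a.e. and boundedly, and $\nabla u_n \to \nabla u$ in $L^p$, so $G'(u_n)\nabla u_n \to G'(u)\nabla u$ in $L^p$. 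Passing to the limit in the distributional definition of the weak derivative establishes $G(u) \in W^{1,p}(\Omega)$ together with (\ref{der-stam}) in this regular case.

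To reach a general Lipschitz $G$ I would mollify, setting $G_\varepsilon := G * \rho_\varepsilon$, so that $G_\varepsilon \in C^\infty$, $|G_\varepsilon'| \le L$, and $\|G_\varepsilon - G\|_{L^\infty(\mathbb{R})} \le CL\varepsilon$ (the uniform rate using only the Lipschitz property). The regular case applies to each $G_\varepsilon$, giving $\nabla G_\varepsilon(u) = G_\varepsilon'(u)\nabla u$, a sequence dominated by the fixed $L^1$ function $L|\nabla u|$. Since $G_\varepsilon(u) \to G(u)$ in $L^p$, extracting a weak limit of the gradients --- by reflexivity when $1 < p < \infty$, and by the Dunford--Pettis theorem when $p = 1$, where the domination by $L|\nabla u| \in L^1$ supplies the required equi-integrability --- identifies that limit as the weak gradient of $G(u)$; thus $G(u) \in W^{1,p}(\Omega)$ with $\|\nabla G(u)\|_{L^p} \le L\|\nabla u\|_{L^p}$, which is i). For the explicit formula (\ref{der-stam}) under the assumption that $G'$ has only finitely many discontinuities $a_1 < \dots < a_m$, observe that $G_\varepsilon' = G' * \rho_\varepsilon \to G'$ pointwise on $\mathbb{R} \setminus \{a_1,\dots,a_m\}$. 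Hence $G_\varepsilon'(u)\nabla u \to G'(u)\nabla u$ almost everywhere: off the level sets $\{u = a_j\}$ by pointwise convergence of $G_\varepsilon'(u)$, and on each $\{u = a_j\}$ because the level-set lemma forces $\nabla u = 0$ there, so that both sides vanish. With the domination $|G_\varepsilon'(u)\nabla u| \le L|\nabla u|$, dominated convergence upgrades this to $L^p$ convergence and pins the weak gradient of $G(u)$ to $G'(u)\nabla u$.

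Finally, for the $W^{1,p}_0$ statement the hypothesis $G(0) = 0$ is used to keep the approximants compactly supported: take $u_n \in C_c^\infty(\Omega)$ with $u_n \to u$ in $W^{1,p}_0(\Omega)$, so that each $G(u_n)$ vanishes outside $\spt u_n$ and, being in $W^{1,p}$ with compact support in $\Omega$, lies in $W^{1,p}_0(\Omega)$. The gradients $\nabla G(u_n)$ are bounded in $L^p$ by $L\|\nabla u_n\|_{L^p}$ and $G(u_n) \to G(u)$ in $L^p$; since $W^{1,p}_0(\Omega)$ is a closed convex subspace, the weak limit for $1 < p < \infty$, respectively the Dunford--Pettis limit for $p = 1$, stays in $W^{1,p}_0(\Omega)$ and equals $G(u)$. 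The main obstacle throughout is the handling of the jumps of $G'$ in ii): the product $G'(u)\nabla u$ is genuinely ambiguous exactly on the level sets $\{u = a_j\}$, and the substance of the theorem is that these sets carry no gradient. Securing the level-set lemma and checking that mollification keeps $|G_\varepsilon'| \le L$ while restoring $G_\varepsilon' \to G'$ off the $a_j$ is the delicate point; once it is in place, each passage to the limit is a routine domination argument.
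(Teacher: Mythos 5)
Your proof is correct and is essentially the standard argument of the references the paper itself cites for this theorem (\cite{gil}, Theorem 7.8, and \cite{evans}); the paper gives no proof of its own beyond the citation and a remark on interpreting the formula on the critical level sets $\{u=t_i\}$, which you handle in the same way via the lemma that $\nabla u=0$ a.e.\ on $\{u=c\}$. The only point left as an external input is that level-set lemma itself, which is exactly Lemma 7.7 in \cite{gil} and is the same black box the cited sources rely on, so your double approximation (smoothing $u$ for $C^1$ nonlinearities, then mollifying $G$ with the uniform bound $|G_\epsilon'|\le L$ and dominated/Dunford--Pettis limits) faithfully reconstructs the proof the paper points to.
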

\begin{proof}
See the appendix in \cite{stam} for the original proof or \cite{gil} Theorem 7.8 and \cite{evans} Theorem 4 for alternative 
proofs and additional comments.  \\ 
We also recall that the formula (\ref{der-stam}) may be interpreted in 
some critical points. In fact, let $(t_i)_{i=1,..,n}$ denote the points of discontinuity of $G'$ and let 
$E_i:=\{x \in \Omega: u(x)=t_i\}$ be the associated level sets for the function $u$. Let $1\leq i\leq n$ be a fixed integer. If $|E_i|>0$ 
then the formula (\ref{der-stam}) has a priori no sense in this last set which is not negligible. Nevertheless it can be shown (see \cite{stam}) 
that $\frac{\partial u}{\partial x_i}=0$ on such a set. Hence we interpret the right hand side of (\ref{der-stam}) as zero in the 
critical set $E_i$.   
\end{proof} 

We now establish some technical results used in the proof of Lemma \ref{lem0}. 
Let $v_l\in H^1(\Omega)$ be given and consider the function $h_{\pm}\in L^{\mathbf 1}_{loc}(\mathbb R)$ defined by: 
\begin{equation} 
h_{\pm}(y):=e^y (y-T_{s,\pm}(y)). \label{h}
\end{equation} 
We introduce the functions  
\begin{eqnarray*}
g_{\pm}(t) &:=& \int_0^{T_l(t)} h_{\pm}(y)dy,\quad t \in \mathbb R \\ 
\zeta_{l,\pm}(x) &:=& g_{\pm}(v_l(x)) \quad \text{a.e. }x \in \Omega.
\end{eqnarray*}
We have: 
\begin{lemm} \label{lemaI}
The function $\zeta_{l,\pm}$ has the properties:
\begin{eqnarray*}
&&\zeta_{l,\pm} \in H^1(\Omega)\cap L^{\infty}(\Omega), \\  
&&\nabla \zeta_{l,\pm} = \nabla e^{T_l(v_l)} \psi_s^{\pm}.
\end{eqnarray*} 
\end{lemm}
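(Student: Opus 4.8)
The plan is to realize $\zeta_{l,\pm}$ as the composition $G(v_l)$ of the Sobolev function $v_l$ with a single scalar Lipschitz function $G = g_{\pm}$, and then to read off both the regularity and the gradient formula from the generalized chain rule, Theorem \ref{stam-lip}. The only genuine work is to check that $g_{\pm}$ really is Lipschitz and bounded, and then to identify its a.e.\ derivative with the factor appearing on the right-hand side.

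First I would record the structural decomposition $g_{\pm} = H_{\pm} \circ T_l$, where $H_{\pm}(s) := \int_0^s h_{\pm}(y)\,dy$. Since $y \mapsto y - T_{s,\pm}(y)$ is continuous (it is precisely the functional form of $\psi_s^{\pm}$, i.e.\ $(y-s)^+$ or $(y+s)^-$), the integrand $h_{\pm}(y) = e^y\,(y - T_{s,\pm}(y))$ is continuous, so $H_{\pm} \in \mathcal{C}^1(\mathbb{R})$ with $H_{\pm}' = h_{\pm}$. Because $T_l$ takes values in the compact interval $[-l,l]$ and is $1$-Lipschitz, while $H_{\pm}$ is Lipschitz and bounded on $[-l,l]$ (its derivative $h_{\pm}$ is continuous, hence bounded on that compact set), the composition $g_{\pm}$ is globally Lipschitz and globally bounded. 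Boundedness immediately gives $\zeta_{l,\pm} = g_{\pm}(v_l) \in L^{\infty}(\Omega)$.

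Next, Theorem \ref{stam-lip} applied to the Lipschitz function $g_{\pm}$ and to $v_l \in H^1(\Omega)$ yields $\zeta_{l,\pm} \in H^1(\Omega)$ together with the chain rule $\nabla \zeta_{l,\pm} = g_{\pm}'(v_l)\,\nabla v_l$ a.e. I would then compute $g_{\pm}'$ explicitly: by the classical chain rule $g_{\pm}'(t) = h_{\pm}(T_l(t))\,T_l'(t)$, which equals $h_{\pm}(t) = e^{t}\,(t - T_{s,\pm}(t))$ on $\{|t| < l\}$ and vanishes on $\{|t|>l\}$; its only discontinuities are at $t = \pm l$, so Theorem \ref{stam-lip}.ii indeed applies. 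On the other hand, applying the same chain rule to the Lipschitz function $t \mapsto e^{T_l(t)}$ gives $\nabla e^{T_l(v_l)} = e^{T_l(v_l)}\,T_l'(v_l)\,\nabla v_l$, and multiplying by the scalar $\psi_s^{\pm} = v_l - T_{s,\pm}(v_l)$ produces $e^{v_l}(v_l - T_{s,\pm}(v_l))\,\nabla v_l$ on $\{|v_l| < l\}$ and $0$ on $\{|v_l| > l\}$, which is exactly $g_{\pm}'(v_l)\,\nabla v_l$. This establishes $\nabla \zeta_{l,\pm} = \nabla e^{T_l(v_l)}\,\psi_s^{\pm}$.

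The main point requiring care --- the ``obstacle'' --- is the behaviour on the critical level sets $\{|v_l| = l\}$ (where $T_l'$ jumps) and $\{v_l = \pm s\}$ (where $h_{\pm}$ has a corner). There the pointwise formula for $g_{\pm}'$ is ambiguous, but this is harmless: by the convention built into Theorem \ref{stam-lip} (namely $\nabla v_l = 0$ a.e.\ on any level set of positive measure), both sides of the identity vanish on these sets, so the comparison above is valid a.e.\ in $\Omega$. Thus no separate estimate is needed beyond the Lipschitz and boundedness verification of $g_{\pm}$.
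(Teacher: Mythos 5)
Your proposal is correct and follows essentially the same route as the paper: realize $\zeta_{l,\pm}=g_{\pm}(v_l)$ as a Lipschitz composition, invoke Theorem \ref{stam-lip} for membership in $H^1(\Omega)\cap L^{\infty}(\Omega)$ and the chain rule, then identify $g_{\pm}'(v_l)\nabla v_l$ with $\nabla e^{T_l(v_l)}\,\psi_s^{\pm}$. The only difference is cosmetic: you justify the Lipschitz bound via the factorization $g_{\pm}=H_{\pm}\circ T_l$ and spell out the treatment of the critical level sets, which the paper asserts more briefly.
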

\begin{proof}
A simple majoration gives 
$$|\zeta_{l,\pm}(x)| \leq  \int_0^{l}| h_{\pm}(y)|dy, \quad \text{a.e. in } \Omega,$$
hence $\zeta_{l,\pm} \in L^{\infty}(\Omega)$. \\ 
We next remark that $g_{\pm}$ is a Lipschitz function and its classical derivative is given by: 
$$g_{\pm}'(t)= h_{\pm}(T_l(t)) {\mathbf 1}_{\{|t| \leq   l\}} \quad \forall t \neq  l,-l.$$
Hence by using Theorem \ref{stam-lip} we obtain $\zeta_{l,\pm} \in H^1(\Omega)$ and 
\begin{align*}
\nabla \zeta_{l,\pm} &= h_{\pm}(T_l(v_l)) {\mathbf 1}_{\{|v_l| \leq   l\}} \nabla v_l =   h_{\pm}(v_l) {\mathbf 1}_{\{|v_l| \leq   l\}} \nabla v_l  \\ 
&= e^{v_l} \nabla v_l {\mathbf 1}_{\{|v_l| \leq   l\}} \psi_s^{\pm}(v_l)  =  \nabla e^{T_l(v_l)} \psi_s^{\pm}.
\end{align*} 
\end{proof}    

\end{document}